\title{Continuous logic and Borel equivalence relations}
\author{Andreas Hallb\"ack}
\address{
  Institut de Math{\'e}matiques de Jussieu--PRG \\
  Universit\'e de Paris \\
  75205 Paris \textsc{cedex} 13 \\
  France}
\author{Maciej Malicki}
\address{
  Institute of Mathematics \\
  Polish Academy of Sciences \\
  ul. Sniadeckich 8, 00--656 \\
  Warsaw\\
  Poland}
\email{mamalicki@gmail.com}
\author{Todor Tsankov}
\address{
  Institut Camille Jordan \\
  Universit\'e Claude Bernard Lyon 1 \\
  43, boulevard du 11 novembre 1918 \\
  69622 Villeurbanne \textsc{cedex} \\
  France
  -- and --
  Institut Universitaire de France}
\email{tsankov@math.univ-lyon1.fr}
\subjclass[2020]{Primary 03E15; Secondary 03C66, 03C75}
\keywords{Borel equivalence relations, infinitary continuous logic, locally compact structures}
\setlist[enumerate,1]{label=(\roman*), font=\normalfont}
\DeclareMathOperator{\Mod}{Mod}
\DeclarePairedDelimiter\oset{\llbracket}{\rrbracket}
\newcommand{\tSen}{\mathrm{S}^{\mathrm{en}}}
\newcommand{\DeltaL}{\Delta_\mathrm{L}}
\newcommand{\tauqf}{t_{\mathrm{qf}}}
\newcommand{\Loo}{{\cL_{\omega\omega}}}
\newcommand{\Cstar}{$\mathrm{C}^\ast$}
\begin{document}

\begin{abstract}
We study the complexity of isomorphism of classes of metric structures using methods from infinitary continuous logic. For Borel classes of locally compact structures, we prove that if the equivalence relation of isomorphism is potentially $\mathbf{\Sigma}^0_2$, then it is essentially countable. We also provide an equivalent model-theoretic condition that is easy to check in practice. This theorem is a common generalization of a result of Hjorth about pseudo-connected metric spaces and a result of Hjorth--Kechris about discrete structures. As a different application, we also give a new proof of Kechris's theorem that orbit equivalence relations of actions of Polish locally compact groups are essentially countable.
\end{abstract}

\maketitle

\section{Introduction}
\label{sec:introduction}

The notion of Borel reducibility of definable equivalence relations was introduced in the foundational paper of Friedman and Stanley~\cite{Friedman1989} and since then, it has become a central part of modern descriptive set theory. In \cite{Friedman1989}, the authors were interested in one specific kind of equivalence relations---isomorphism of countable structures---and this still remains one of the best studied facets of the general theory. This setting allows to use methods from descriptive set theory, Polish group dynamics, and infinitary logic and their interplay leads to a rich and detailed theory. It was further developed in the papers of Hjorth--Kechris~\cite{Hjorth1996} and Hjorth--Kechris--Louveau~\cite{Hjorth1998}, where the Borel orbit equivalence relations of the infinite symmetric group $S_\infty$ were studied in detail. We recommend the book of Gao~\cite{Gao2009a} as a general reference for the more basic results. 

Hjorth's work on turbulence \cite{Hjorth2000} and many papers by various authors following it showed that a large class of equivalence relations coming from analysis cannot be captured by isomorphism of countable structures. This fueled the research on general orbit equivalence relations of Polish groups, mostly using methods from dynamics and Baire category. In many cases, proofs were driven by intuition from discrete model theory but no appropriate model-theoretic framework was available to formalize these ideas and they were often translated to the language of dynamics. Two notable examples are the papers of Becker~\cite{Becker1998a} and Hjorth~\cite{Hjorth2008a} (see also Melleray~\cite{Melleray2010}).

However, with the development of continuous logic in recent years, it is now possible to use model theory directly in this more general setting. The work of Gao and Kechris~\cite{Gao2003} and Elliot, Farah, Paulsen, Rosendal, Toms, and Törnquist \cite{Elliott2013} showed that the class of equivalence relations reducible to isomorphism of metric structures is exactly the same as those reducible to an orbit equivalence relation of a Polish group action. A major difference with the discrete setting is that countable structures can be thought of all having the same universe (some fixed countable set) and then isomorphism is nothing but the orbit equivalence relation of the natural action of $S_\infty$. This is very convenient for applying both dynamical and model-theoretic methods. In the continuous setting, this is no longer possible and two approaches of encoding separable metric structures have emerged in the literature. The first is encoding them as closed substructures of an appropriately chosen universal and sufficiently homogeneous structure (for example the Urysohn metric space). This is the approach taken in \cite{Gao2003} and \cite{Elliott2013}, where the authors show that if one restricts the class of subspaces appropriately, one can still recover isomorphism as the orbit equivalence relation of the isometry group of the Urysohn space. However, the encoding for achieving this is often cumbersome and the result is somewhat difficult to work with. An alternative method, closer to the encoding of discrete structures, was used by Ben Yaacov, Doucha, Nies, and Tsankov in \cite{BenYaacov2017a}. It is based on considering the values of the continuous predicates on a countable dense subset of the structure and recovering the whole structure from this information by taking the completion. This encoding allows for many interesting topologies on the space of structures, given by fragments of $\Lomo$, as in the discrete setting. Its main disadvantage is that one loses the group action, even though some dynamical methods, most notably a version of the Vaught transform, are still available. In order to compare the two methods, the reader may consult \cite{BenYaacov2017a} and the paper of Coskey and Lupini \cite{Coskey2016}: each proves a version of the López-Escobar theorem for the respective encoding.

First-order finitary logic is usually not expressive enough for descriptive set theoretic applications. Because of Scott sentences and the López-Escobar theorem, the logic that is usually employed for the study of the isomorphism equivalence relation of discrete structures is $\Lomo$, which allows for countable conjunctions and disjunctions. A continuous $\Lomo$ logic was first studied by Ben Yaacov and Iovino in \cite{BenYaacov2009a} and a continuous logic version of Scott analysis was developed in \cite{BenYaacov2017a}, laying the foundations for descriptive set theoretic applications.

In the beginning of this paper, we further develop the model theory of continuous $\Lomo$ logic and most notably the topometric structure of the type spaces. Because of the lack of compactness, there are some additional difficulties when compared with the usual continuous logic setting. Then, given an $\Lomo$ fragment $\cF$ and an $\cF$-theory $T$, we define a topology on the space $\Mod(T)$ of separable models of $T$. In analogy with the discrete setting, we identify precisely when an isomorphism class of a model is $G_\delta$.
\begin{theorem}
	\label{th:intro:AtomicG_delta}
	Let $\cF$ be a fragment, $T$ be an $\cF$-theory and $\xi \in \Mod(T)$. Let $t_\cF$ denote the topology on $\Mod(T)$ given by the fragment $\cF$ and let $[\xi]$ denote the set of models in $\Mod(T)$ isomorphic to $\xi$. Then the following are equivalent:
    \begin{enumerate}
    \item $[\xi]$ is $\bPi^0_2(t_\cF)$;
    \item $[\xi]$ is $t_\cF$-comeager in $\cl[t_\cF]{[\xi]}$;
    \item $[\xi]$ is $t_\cF$-non-meager in $\cl[t_\cF]{[\xi]}$;
    \item The model encoded by $\xi$ is $\cF$-atomic.
    \end{enumerate}
\end{theorem}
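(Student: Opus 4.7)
I would prove the cycle (iv)$\Rightarrow$(i)$\Rightarrow$(ii)$\Rightarrow$(iii)$\Rightarrow$(iv). The middle two implications are pure topology: $X := \cl[t_\cF]{[\xi]}$ is closed in the Polish space $(\Mod(T), t_\cF)$ and hence itself Polish; $[\xi]$ is tautologically dense in $X$ and, under (i), is $\bPi^0_2$ in $X$, so by the Baire category theorem it is comeager in $X$, and comeager trivially implies non-meager.

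For (iv)$\Rightarrow$(i), I would describe $[\xi]$ by a Scott-style back-and-forth condition. Because $\xi$ is $\cF$-atomic, each $\cF$-type realized in $\xi$ is principal, so for any finite tuple $\bar a$ of elements of $\xi$'s canonical dense sequence and each $\varepsilon > 0$ the set of $\eta$ containing a tuple whose $\cF$-type lies within $\varepsilon$ of $\mathrm{tp}^\cF(\bar a)$ is $t_\cF$-open. The usual back-and-forth condition---every finite tuple of $\eta$ is arbitrarily well approximated in $\cF$-type by a tuple of $\xi$, and vice versa---then exhibits $[\xi]$ as a countable intersection of countable unions of such open conditions, giving a $\bPi^0_2$ set. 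Correctness of this description rests on the uniqueness up to isomorphism of separable $\cF$-atomic models realizing the same principal types, which I expect to be available from the paper's earlier development of Scott analysis.

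The main obstacle is (iii)$\Rightarrow$(iv), which I would attack by contraposition: assuming some tuple $\bar a$ of $\xi$ realizes a non-isolated $\cF$-type $p$, I must show that $[\xi]$ is meager in $X$. The plan is a category argument driven by the non-isolation of $p$. One picks a countable refining family $V_n$ of open neighborhoods of $p$ in the $\cF$-type space, none of which isolates $p$, and applies an approximate continuous Vaught transform inside $X$ to produce, for each $n$, an open dense subset of $X$ whose models realize tuples of $\cF$-types in $V_n$ incompatible with those $\xi$ realizes. Intersecting these sets yields a dense $G_\delta$ in $X$ disjoint from $[\xi]$, forcing $[\xi]$ to be meager and contradicting (iii). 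The technical heart---and the principal difficulty---is establishing the density of these open sets in the absence of a Polish group acting on $\Mod(T)$: one must use the encoding-level approximate Vaught transform and the topometric structure of the $\cF$-type spaces developed earlier in the paper to perturb an arbitrary $\eta \in X$ so as to realize any prescribed approximate type near $p$ but distinct from it, while respecting the $\cF$-theory. This is where the continuous logic setup diverges most sharply from the classical discrete Hjorth--Kechris argument and where the paper's topometric preliminaries do the essential work.
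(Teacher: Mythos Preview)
Your cycle and the trivial implications (i)$\Rightarrow$(ii)$\Rightarrow$(iii) match the paper exactly. The two substantive steps, however, are both handled more directly in the paper than in your plan.

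For (iii)$\Rightarrow$(iv), you propose to build, by hand, a Vaught-transform category argument that produces a dense $G_\delta$ in $\cl[t_\cF]{[\xi]}$ of models omitting the non-isolated type $p$. This is exactly what the omitting types theorem already delivers: the paper simply invokes its strong form (Remark~\ref{rem:strong-omitting-types}), which says that for any $\tau$-meager, $\partial$-open $\Xi \subseteq \tS_n(\Th_\cF(M_\xi))$, comeagerly many $\eta$ in $\Mod(\Th_\cF(M_\xi)) = \cl[t_\cF]{[\xi]}$ omit $\Xi$. One takes $\Xi$ to be a small $\partial$-ball around $p$ witnessing non-isolation; since $M_\xi$ realizes $p$, these comeagerly many $\eta$ are non-isomorphic to $M_\xi$. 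Your approach would work, but it amounts to re-proving the omitting types theorem inside this argument; the paper's route is a one-line citation.

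For (iv)$\Rightarrow$(i), your Scott-style back-and-forth description is plausible but again heavier than what the paper does. The paper observes that, by uniqueness of separable atomic models (Proposition~\ref{p:uniqueness-atomic}), $\eta \in [\xi]$ iff $M_\eta$ is an $\cF$-atomic model of $T' = \Th_\cF(M_\xi)$, i.e.\ iff $M_\eta \models T'$ and every tuple of $M_\eta$ has isolated type. Since the set $I_n$ of isolated types in $\tS_n(T')$ is $\tau$-$G_\delta$ (Lemma~\ref{l:compl-isolated-types}), the condition $\forall u \in \N^{<\N}\ \tp^\eta(u) \in I_{|u|}$ is manifestly $G_\delta$. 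No back-and-forth bookkeeping is needed; the topometric lemma on isolated types does the work. Your approach would also need to justify carefully that the approximate back-and-forth condition you describe characterizes isomorphism exactly (not just elementary equivalence), which in the continuous setting is precisely where one ends up appealing to atomicity and the uniqueness of atomic models anyway.
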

Even though to our knowledge, this is the first paper studying the fragment topologies in the continuous setting, a related topology, the one generated by the atomic formulas, has already been considered, for example by Cúth, Dole\v{z}al, Doucha, and Kurka in \cite{Cuth2019p}. If the fragment $\cF$ is the one of finitary logic and the theory $T$ eliminates quantifiers, then the two topologies coincide and this allows us to recover some results of \cite{Cuth2019p} about Banach spaces from our general theorem. 

Next we turn to the study of the situation where the isomorphism equivalence relation on models of a theory $T$ has low Borel complexity. Recall that if $\Gamma$ is a pointclass, $X$ is a standard Borel space and $E$ is a Borel equivalence relation on $X$, then $E$ is called \df{potentially $\Gamma$} if there is a Polish topology $\tau$ on $X$ compatible with its Borel structure such that $E \sub X^2$ is $\Gamma$ in $\tau \times \tau$. $E$ is called \df{essentially countable} if it is Borel reducible to a Borel equivalence relation with countable classes. It is obvious that an essentially countable equivalence relation is potentially $\bSigma^0_2$. A somewhat surprising converse to this for orbit equivalence relations of $S_\infty$ is due to Hjorth and Kechris~\cite{Hjorth1996}. One possible proof goes through a third equivalent model-theoretic condition that is easily verified in practice: isomorphism on a class of countable structures is essentially countable iff there exists a fragment $\cF$ such that for each structure $M$ in the class, there exists a tuple $\bar a \in M^k$ such that $\Th_\cF(M, \bar a)$ is $\aleph_0$-categorical. This theorem easily implies, for example, that isomorphism (on a Borel class of) of finitely generated discrete structures is essentially countable.

If one wants to generalize the theorem of Hjorth and Kechris to the continuous setting, some care is needed. First, the result for $S_\infty$ as stated above simply fails for general Polish groups. A simple way to see this is to consider the Banach space $\ell_1$ as an $F_\sigma$ Polishable subgroup of $\R^\N$; then the orbit equivalence relation given by the translation action $\ell_1 \actson \R^\N$ is $F_\sigma$ but is not essentially countable \cite{Hjorth2000}*{Proposition~3.25}. The corollary about finitely generated structures also spectacularly fails in the continuous setting: by combining several results in the \Cstar-algebraic literature with a theorem of Sabok~\cite{Sabok2016}, one sees that isomorphism for singly generated \Cstar-algebras is universal for orbit equivalence relations of Polish group actions (cf. Remark~\ref{rem:one-gen-Cstar-algebras}).

However, a form of the Hjorth--Kechris theorem is still true if one restricts to isomorphism of locally compact structures. There is also an appropriate model theoretic condition which is easy to check in applications (and implies the Hjorth--Kechris one in the discrete setting). We call a type $p$ \df{rigid} if for any two realizations $(M, \bar a)$, $(N, \bar b)$ of $p$ in separable models $M$ and $N$, we must have that $M$ and $N$ are isomorphic. Note, however, that the isomorphism need not send $\bar a$ to $\bar b$: this is what makes this condition weaker than just saying that $p$ is  $\aleph_0$-categorical as a theory.

If $T$ is a theory, we denote by $\cong_T$ the equivalence relation of isomorphism of models of $T$. The following is our main theorem.
\begin{theorem}
  \label{th:intro:loc-cpct-HK}
  Let $T$ be a countable, $\Lomo$ theory such that all of its separable models are locally compact. Then the following are equivalent:
	\begin{enumerate}
		\item $\cong_T$ is potentially $\bSigma^0_2$;
		
		\item \label{intro:th:loc-compact-HK:2} There exists a fragment $\cF$ such that for every $\xi \in \Mod(T)$, there is $k \in \N$ such that the set
		\begin{equation*}
		\set{\bar a \in M^k : \tp_\cF \bar a \text{ is rigid}}
		\end{equation*}
		has non-empty interior in $M^k$;
		
		\item $\cong_T$ is essentially countable.
	\end{enumerate} 
\end{theorem}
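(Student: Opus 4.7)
The easy direction $(3) \Rightarrow (1)$ is immediate. For $(1) \Rightarrow (2)$, I would mirror the Hjorth--Kechris strategy from the discrete case, using Theorem~\ref{th:intro:AtomicG_delta} as the main input; notice that local compactness is not needed here. Starting from the assumption that $\cong_T$ is potentially $\bSigma^0_2$, one absorbs the witnessing finer Polish topology into a larger fragment $\cF$ so that $\cong_T$ becomes $\bSigma^0_2$ in $t_\cF \times t_\cF$. A Vaught-transform argument, adapted to the topometric setting developed in the earlier sections of the paper, then shows that each isomorphism class $[\xi]$ is $t_\cF$-non-meager in its closure; by Theorem~\ref{th:intro:AtomicG_delta} every $\xi \in \Mod(T)$ is therefore $\cF$-atomic. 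In an $\cF$-atomic model each realized $\cF$-type $p$ is principal, and the standard uniqueness result for atomic models---applied to the completion of $T$ by the axioms of $p$---shows that $p$ must be rigid. Hence $(2)$ holds in a very strong form, with every tuple in every model having rigid $\cF$-type.

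The implication $(2) \Rightarrow (3)$ is the core of the theorem, and it is here that local compactness plays an essential role. The plan is to realize $\cong_T$ as a Borel reduct of the orbit equivalence relation of a Polish \emph{locally compact} group action, and then to invoke Kechris's theorem that such orbit equivalence relations are essentially countable---a result whose new proof the paper also promises to deliver, suggesting the two arguments share technical infrastructure. Fix a fragment $\cF$ and integer $k$ witnessing $(2)$. Using Kuratowski--Ryll-Nardzewski uniformization, pick a rigid tuple $\bar a(\xi) \in M(\xi)^k$ in a Borel manner, lying in the interior of the set of rigid tuples of $\xi$. Rigidity guarantees that the $\cF$-type of $\bar a(\xi)$ already determines $[\xi]$ up to isomorphism, while local compactness of $M(\xi)$ endows compact neighborhoods of $\bar a(\xi)$ with locally compact Polish isometry (hence automorphism) groups. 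The reduction then proceeds by stratifying $\Mod(T)$ into countably many Borel pieces on which these local data vary continuously, and gluing the resulting locally compact local group actions into a global one whose orbits are exactly the $\cong_T$-classes.

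The main obstacle I anticipate is this final gluing step. In contrast to the discrete case, where an $\aleph_0$-categorical tuple immediately produces a countable orbit count and a smooth reduction, in the continuous setting a single separable locally compact model can realize a continuum of pairwise distinct rigid $\cF$-types, so one cannot simply map $\xi \mapsto \tp_\cF(\bar a(\xi))$ and hope for a countable target equivalence. Overcoming this requires using local compactness in two distinct guises: once to ensure that the Borel selection of $\bar a(\xi)$ is topologically well-behaved with respect to the ambient metric, and once to produce genuinely locally compact acting groups piece by piece that then assemble into a single Polish locally compact group action. This is precisely the point at which the local compactness hypothesis distinguishes the present theorem from the failed naive generalization (via the $\ell_1$ subgroup of $\R^\N$) recalled in the introduction, and I expect it to absorb the bulk of the technical work.
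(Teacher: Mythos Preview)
Your proposal has genuine gaps in both nontrivial implications.

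For $(1) \Rightarrow (2)$, your route through atomicity does not work. Theorem~\ref{th:intro:AtomicG_delta} characterizes when $[\xi]$ is $\bPi^0_2$, not $\bSigma^0_2$; having each class $\bSigma^0_2$ in $t_\cF$ does not force it to be non-meager in its closure, and there is no Vaught-transform argument that produces this. More seriously, even if $M_\xi$ were $\cF$-atomic, it would not follow that every realized type is rigid: an isolated type $p$ is realized in \emph{every} model of $\Th_\cF(M_\xi)$, including non-atomic ones, so uniqueness of atomic models says nothing about an arbitrary $(N,\bar b)\models p$. (In classical logic, take $\mathrm{ACF}_0$: every type realized in the prime model $\overline{\Q}$ is isolated but is also realized in models of every transcendence degree.) The paper bypasses atomicity entirely. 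It first shows, via the Vaught transforms and Lop\'ez-Escobar, that potentially $\bSigma^0_2$ yields a fragment in which each class $[\xi]$ is itself $\bSigma^0_2$; then, writing $[\xi]=\bigcup_n A_n$ with $A_n$ closed and applying Baire category to the map $\pi\colon D(M_\xi)\to[\xi]$, one finds a basic open set of dense sequences landing in some $A_{n_0}$. For any $\bar a$ in the corresponding ball, a direct density-and-approximation argument shows that any $(N,\bar b)\equiv_\cF(M_\xi,\bar a)$ admits an enumeration whose type lies in $\overline{A_{n_0}}\subseteq[\xi]$, giving rigidity.

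For $(2) \Rightarrow (3)$, reducing to Kechris's locally-compact-group theorem would be circular in the paper's architecture (Kechris's theorem is derived \emph{from} this one), and your gluing of ``local group actions'' is not a workable construction: the automorphism group of a locally compact structure need not be locally compact, and there is no mechanism for assembling local isometry groups of balls into a single Polish locally compact group acting with the correct orbits. The paper's approach is entirely different. It invokes the criterion that $\cong_T$ is essentially countable once there is a Borel map $\Psi\colon\Mod(T)\to Y$ whose restriction to each class has countable image and which separates classes. The target is $Y=\bigsqcup_k K(\tS_k(T))$, and $\Psi(\xi)$ is the $\tau$-closure of a certain $\partial$-``chain-component'' of a rigid type $p=\tp^\xi u$ inside a basic open set $U$. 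Local compactness of $M_\xi$ is used to show that $\partial$-balls of small radius around realized types are $\partial$-compact (hence $\tau$-compact) and contained in $\Theta_k(M_\xi)$, which simultaneously makes $\Psi(\xi)$ a well-defined compact set, forces it to consist of types realized in $M_\xi$ (so rigidity gives injectivity on classes), and ensures there are only countably many choices of the auxiliary data $U,\eps$ (giving countable image on each class). The Borelness of $\Psi$ requires separate work showing that $\rho_\xi$ and the maps to compact balls in the type space are Borel.
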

This theorem has a number of corollaries. The notion of a \df{pseudo-connected} locally compact metric space was introduced by Gao and Kechris in \cite{Gao2003} in order to study the complexity of isometry of locally compact metric spaces. For example, connected locally compact spaces and proper metric spaces are pseudo-connected. It is easy to see that metric structures whose underlying metric space is pseudo-connected satisfy condition \ref{intro:th:loc-compact-HK:2} above (in fact, \emph{all} types realized in such structures are rigid). So isomorphism of pseudo-connected metric structures is an essentially countable equivalence relation. This recovers a theorem of Hjorth, previously conjectured by Gao and Kechris (see \cite{Gao2003}*{Theorem~7.1}), for the pure metric space case and in fact, we have used some of the ideas of his proof.

Another well known result is Kechris's theorem \cite{Kechris1992a} that orbit equivalence relations of actions of Polish locally compact groups are essentially countable. This is also an easy consequence of Theorem~\ref{th:intro:loc-cpct-HK} after an appropriate encoding (cf. Corollary~\ref{c:Kechris}).

We expect that the continuous infinitary logic framework we build will find further applications in descriptive set theory. In view of this, in Section~\ref{sec:fragm-cont-l}, we develop fairly carefully the theory of type spaces, giving three equivalent definitions for the metric on them used to define the topometric structure. Section~\ref{sec:omitt-types-atom} is devoted to a self-contained proof of the omitting types theorem for $\Lomo$ continuous logic (the theorem is originally due to Eagle~\cite{Eagle2014}). In Section~\ref{sec:topol-gener-fragm}, we define the Borel space of models of a theory and the Polish topologies on it given by fragments of $\Lomo$. Finally, Sections \ref{sec:isomorphism} and \ref{sec:locally-compact} contain the proofs of our main results and Section~\ref{sec:pseudo-connected-metric-struct} is devoted to applications.

\subsection*{Acknowledgments} We are grateful to Itaï Ben Yaacov and Michal Doucha for useful discussions and to Ward Henson for providing useful references concerning the model theory of Banach spaces. Research on this paper was partially supported by the ANR project AGRUME (ANR-17-CE40-0026) and the \emph{Investissements d'Avenir} program of Université de Lyon (ANR-16-IDEX-0005).


\section{Fragments of continuous $\Lomo$ logic and type spaces}
\label{sec:fragm-cont-l}

\subsection{$\Lomo$ logic}
\label{sec:lomo-logic}

We start by recalling the setting of $\Lomo$ continuous logic. We mostly follow \cite{BenYaacov2017a}, however, the exposition is simplified by the fact that we do not need to keep careful track of moduli of continuity. A \df{modulus of continuity} is a continuous function $\Delta \colon [0, \infty) \to [0, \infty)$ satisfying for all $r, s \in [0, \infty)$:
\begin{itemize}
\item $\Delta(0)=0$;
\item $\Delta(r) \leq \Delta(r+s) \leq \Delta(r)+\Delta(s)$.
\end{itemize}
Suppose that $\Delta$ is a modulus of continuity and that $(X, d_X)$ and $(Y,d_Y)$ are metric spaces. We say that a map $f\colon X \to Y$ \df{respects} $\Delta$ if
\[ d_Y(f(x_1),f(x_2)) \leq \Delta(d_X(x_1, x_2)) \quad \text{ for all } x_1, x_2 \in X.\]

A \df{signature} $L$ is a collection of predicate and function symbols and as is customary, we treat constants as $0$-ary functions. Throughout the paper, we assume that $L$ is countable. To each symbol $P$ are associated its \df{arity} $n_P$ and its \df{modulus of continuity} $\Delta_P$. In addition, if $P$ is a predicate symbol, we associate to it its \df{bound}, a compact interval $I_P \sub \R$ where it takes its values. In a model $M$, predicate symbols are interpreted as real-valued functions of the appropriate arity respecting the modulus of continuity and the bound; similarly for function symbols. There is always a special binary predicate for the metric, denoted by $d$. To make sense of the modulus of continuity for symbols of arity greater than $1$, we need to fix a metric on tuples of elements of the model. By convention, if $M$ is a model with metric $d$ and $k \in \N$, we equip $M^k$ with the metric given by
\begin{equation*}
  d(\bar a, \bar b) = \max_i d(a_i, b_i).
\end{equation*}

Terms and atomic formulas are defined in the usual way. More general formulas are recursively defined as explained below; it is important to keep in mind that to any formula are associated its modulus of continuity and bound that can be calculated from its constituents. One can build new formulas using:
\begin{description}
\item[Finitary connectives] If $\phi$ and $\psi$ are formulas and $r \in \Q$, then $\phi + \psi$, $r \phi$, and $\phi \vee \psi$ are again formulas. Here $\phi \vee \psi$ is interpreted as $\max(\phi, \psi)$ and we also define $\phi \wedge \psi \coloneqq -(-\phi \vee -\psi) = \min(\phi, \psi)$. The constant $\bOne$ is also a formula. By the lattice version of the Stone--Weierstrass theorem (see \cite{Jonge1977}*{Theorem~13.12}), these connectives suffice to approximate any continuous combination of formulas. 
  
\item[Quantifiers] If $\phi(x, \bar y)$ is a formula, then $\sup_x \phi$ and $\inf_x \phi$ are also formulas.
  
\item[Infinitary connectives] If $\set{\phi_n(\bar x) : n \in \N}$ are formulas with the same finite set of free variables $\bar x$ that \emph{respect a common continuity modulus and bound}, then $\bigvee_n \phi_n$ and $\bigwedge_n \phi_n$ are also formulas. The symbol $\bigvee$ is interpreted as a countable supremum and $\bigwedge$ is interpreted as a countable infimum. The condition that we impose ensures that the interpretations of these formulas are still bounded, uniformly continuous functions.
\end{description}

The interpretations of formulas in a structure $M$ are defined in the usual way. We emphasize again that the interpretation of each formula of arity $k$ is a uniformly continuous, bounded function $M^k \to \R$ whose modulus of continuity and bound can be calculated syntactically from the formula (and thus are the same for all models). If $\phi(\bar x)$ is a formula, we will denote by $\phi^M$ the interpretation of $\phi$ in $M$. If $\bar a \in M^k$, we will often write $\phi(\bar a)$ instead of $\phi^M(\bar a)$. A \df{sentence} is a formula with no free variables and a \df{theory} is a collection of conditions of the form $\phi = c$, where $\phi$ is a sentence and $c \in \R$. A condition $\phi = c$ is \df{satisfied} in a structure $M$ if $\phi^M = c$. A structure $M$ is a \df{model} of the theory $T$, denoted by $M \models T$, if all conditions in $T$ are satisfied in $M$.

\begin{defn}
  A \df{fragment} of $\Lomo(L)$ is a countable collection $\mathcal{F} \sub \Lomo(L)$ that contains all atomic formulas and is closed under finitary connectives, quantifiers, taking subformulas, and substitution of terms for variables.
\end{defn}

The smallest fragment is \df{the finitary fragment} $\cL_{\omega\omega}(L)$ that contains no infinitary formulas. If $\cF$ is a fragment and $T$ is a theory, we will say that $T$ is an $\cF$-theory if all sentences that appear in $T$ are in $\cF$.


\subsection{Type spaces}
\label{sec:type-spaces}

Let $\cF \sub \Lomo(L)$ be a fragment. The collection of $\cF$-formulas over a fixed (finite or infinite) tuple of variables $\bar x$ form a Riesz space $F_{\bar x}$ with the operations given by the finitary connectives defined above. If $T$ is an $\cF$-theory, we have a natural seminorm on this space given by:
\begin{equation}
  \label{eq:norm-form}
  \nm{\phi}_T = \sup \set{|\phi(\bar a)| : M \models T, \bar a \in M^k}
\end{equation}
The set $\set{\phi : \nm{\phi}_T = 0}$ is an ideal in $F_{\bar x}$ and the completion of the quotient of $F_{\bar x}$ by this ideal is an archimedean Banach lattice (with unit $\bOne$) that will be denoted by $F_{\bar x}(T)$. Then we can define the space of \df{approximately realizable types} as follows:
\begin{equation*}
  \widehat \tS_{\bar x}(T) = \set{p \in F_{\bar x}^* : p(\phi \vee \psi) = p(\phi) \vee p(\psi) \text{ for all } \phi, \psi \in F_{\bar x} \And p(\bOne) = 1 }.
\end{equation*}
$\widehat \tS_{\bar x}(T)$ is clearly closed in the weak$^*$ topology, and therefore a compact space. We will often write $\phi(p)$ or $\phi^p$ instead of $p(\phi)$.

The topology on $\widehat \tS_{\bar x}$ is given by pointwise convergence on formulas, i.e., basic open sets are of the form
\begin{equation*}
  \oset{\phi < r} \coloneqq \set{p \in \widehat \tS_{\bar x}(T) : \phi(p) < r},
\end{equation*}
where $r \in \R$ and $\phi$ is a formula (and dually, $\oset{\phi > r} = \oset{-\phi < -r}$). 

If $|\bar x| = n$, we will also write $\widehat \tS_n(T)$ for $\widehat \tS_{\bar x}(T)$. If $M \models T$ and $\bar a \in M^{\bar x}$, the \df{type of $\bar a$} is defined by
\begin{equation*}
  \phi(\tp \bar a) = \phi(\bar a) \quad \text{for all } \phi \in F_{\bar x}(T).
\end{equation*}
Sometimes we also write $\tp_\cF(\bar a)$ to specify the fragment if it is not understood from the context. The set $\tS_{\bar x}(T)$ of \df{realizable types} (or just \df{types}) is defined by
\begin{equation*}
  \tS_{\bar x}(T) = \set{\tp(\bar a) : M \text{ is a model of } T, \bar a \in M^{\bar x}}.
\end{equation*}
If $\cF$ is $\Loo$, then the compactness theorem tells us that every approximately realizable type is realizable, i.e., $\widehat \tS_{\bar x}(T) = \tS_{\bar x}(T)$. For more general fragments, this is usually not the case. A typical situation in which a type $p$ is not realizable occurs when for some infinitary formula $\Phi = \bigwedge_k \phi_k$, we have that $\Phi(p) < \inf_k \phi_k(p)$. Nonetheless, we still have the following.
\begin{lemma}
  \label{l:realizable-dense}
  The set $\tS_{\bar x}(T)$ is dense in $\widehat \tS_{\bar x}(T)$.
\end{lemma}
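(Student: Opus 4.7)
The plan is to show that every non-empty basic open set in $\widehat \tS_{\bar x}(T)$ meets $\tS_{\bar x}(T)$. Since $\oset{\phi < r} = \bigcup_{s \in \Q,\, s < r} \oset{\phi < s}$, the sub-basic opens with rational thresholds already generate the topology, and any basic open can be written as $U = \bigcap_{i=1}^n \oset{\phi_i < r_i}$ with $\phi_i \in F_{\bar x}$ and $r_i \in \Q$. The key idea is to encode membership in $U$ as strict positivity of a single formula, then appeal directly to the definition \eqref{eq:norm-form} of the seminorm on $F_{\bar x}$.

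Fix a non-empty such $U$ and pick $q \in U$. I would form the formulas $\psi = \bigvee_{i=1}^n (\phi_i - r_i \bOne) \in F_{\bar x}$ and $\chi = (-\psi) \vee 0 \in F_{\bar x}$, both of which are legitimate since fragments are closed under finitary connectives. Using that $q$ is linear, sends $\bOne$ to $1$, and preserves $\vee$, one computes $\psi(q) = \max_i(\phi_i(q) - r_i) < 0$ and therefore $\chi(q) = -\psi(q) > 0$. This forces $\nm{\chi}_T > 0$: every $q \in \widehat \tS_{\bar x}(T)$ is automatically $\nm{\cdot}_T$-bounded, because the inequality $|\chi| \leq \nm{\chi}_T \bOne$ holds pointwise in every model of $T$ and hence in the natural lattice order on $F_{\bar x}$, so $|q(\chi)| = q(|\chi|) \leq \nm{\chi}_T \cdot q(\bOne) = \nm{\chi}_T$.

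Unravelling \eqref{eq:norm-form}, the inequality $\nm{\chi}_T > 0$ produces a model $M \models T$ and a tuple $\bar a \in M^{\bar x}$ with $\chi^M(\bar a) > 0$, equivalently $\psi^M(\bar a) < 0$, equivalently $\phi_i^M(\bar a) < r_i$ for every $i$. Hence $\tp(\bar a) \in U \cap \tS_{\bar x}(T)$, as required. The only point demanding real care is this automatic continuity of elements of $\widehat \tS_{\bar x}(T)$ with respect to $\nm{\cdot}_T$; but as noted it is essentially tautological given the lattice-homomorphism and unit conditions in the definition, which is why one can argue at the level of $F_{\bar x}$ itself without having to pass to the completion $F_{\bar x}(T)$.
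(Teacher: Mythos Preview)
Your proof is correct and follows essentially the same approach as the paper: encode membership in the open set as strict positivity of a single nonnegative formula (the paper uses $r \dotminus \phi$, you use $\chi = (-\psi) \vee 0$, which specializes to the same thing), use the lattice-homomorphism property of $q$ to compute, and then invoke \eqref{eq:norm-form}. The only differences are cosmetic---you treat finite intersections of sub-basic opens explicitly rather than relying on the Riesz space closure to collapse them to a single $\oset{\phi < r}$, and you spell out the automatic $\nm{\cdot}_T$-boundedness of $q$ that the paper leaves implicit.
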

\begin{proof}
  Recall that the $\dotminus$ operation is defined by $x \dotminus y = (x - y) \vee 0$. Suppose that for some formula $\phi(\bar x)$ and $r \in \R$ the open set $\oset{\phi < r} \sub \widehat \tS_{\bar x}(T)$ is non-empty. In particular, there is $p \in \widehat \tS_{\bar x}(T)$ such that $\phi(p) < r$. Then
  \begin{equation*}
    p(r \dotminus \phi) = r \dotminus p(\phi) > 0,
  \end{equation*}
  which implies that $\nm{r \dotminus \phi}_T > 0$. By the definition \eqref{eq:norm-form} of $\nm{\cdot}_T$, this means that there is $M \models T$ and $\bar a \in M^{\bar x}$ such that $\phi(\bar a) < r$.
\end{proof}
We will see later in Proposition~\ref{p:Sn-dense-Gdelta} that $\tS_{\bar x}(T)$ is a $G_\delta$ set and therefore a Polish space.

Next we see that the Banach lattice of formulas $F_{\bar x}(T)$ is isomorphic to the lattice $C(\widehat \tS_{\bar x}(T))$ of real-valued, continuous functions on $\widehat \tS_{\bar x}(T)$ equipped with the $\sup$ norm. This is just a version of the Yosida representation theorem, see \cite{Jonge1977}*{Section~13}.
\begin{prop}
  \label{p:Gelfand-duality}
  The map $\Gamma \colon F_{\bar x}(T) \to C(\widehat \tS_{\bar x}(T))$ defined by
  \begin{equation*}
    \Gamma(\phi)(p) = p(\phi)
  \end{equation*}
  is an isometric isomorphism of Banach lattices.
\end{prop}
\begin{proof}
  It is clear that $\Gamma$ is a lattice homomorphism. To see that it is isometric, note that, using Lemma~\ref{l:realizable-dense},
  \begin{equation*}
    \begin{split}
      \nm{\Gamma(\phi)} &= \sup \set{|\phi(p)| : p \in \widehat \tS_{\bar x}(T)} \\
        &= \sup \set{|\phi(p)| : p \in \tS_{\bar x}(T)} \\
        &= \sup \set{|\phi(\bar a)| : M \models T, \bar a \in M^{\bar x}} = \nm{\phi}_T.
    \end{split}
  \end{equation*}
  This implies that $\Gamma$ is injective and that its image is closed. The image is dense by the Stone--Weierstrass theorem, so $\Gamma$ is also surjective.
\end{proof}

\begin{remark}
  \label{rem:types-and-constants}
  There is a subtle feature of continuous $\Lomo$ logic regarding types and constants that can sometimes be confusing. In classical $\Lomo$ logic, as well as in finitary continuous logic, if $\phi(c)$ is a formula containing a constant symbol $c$, then we can replace all occurrences of $c$ by a variable $x$ and still obtain a valid formula $\phi(x)$. In particular, a type in $\tS_1(T)$ is nothing but a complete theory in the language expanded by a constant symbol extending $T$. In continuous $\Lomo$ logic, this is no longer the case. For a simple example, consider the sentence
\begin{equation*}
  \bigvee_n (n \cdot \nm{c} \wedge 1)
\end{equation*}
in the language of Banach spaces. This sentence evaluates to $1$ if $c \neq 0$ and to $0$ if $c = 0$ in any Banach space. However, replacing $c$ by a variable yields an invalid formula because it does not respect the equicontinuity rule for infinitary connectives (and indeed, its interpretation would be discontinuous at $0$).

This is what allows to have Scott sentences for structures of the form $(M, a)$, where the orbit $\Aut(M) \cdot a$ is not closed. Note that if $b \notin \Aut(M) \cdot a$, then $(M, a) \ncong (M, b)$. However, if $\phi(x)$ is any $\Lomo$-formula and $b \in \cl{\Aut(M) \cdot a}$, then $\phi^M(a) = \phi^M(b)$. So it is impossible to distinguish $(M, a)$ and $(M, b)$ by a formula $\phi(x)$ in the language of $M$ but it is possible to distinguish them by a sentence (in an appropriately rich fragment) if the language is augmented by a constant symbol.
\end{remark}

An important feature of type spaces in continuous logic is that, in addition to the logic topology, they are also equipped with a metric, coming from the metric on the models, which, in general, defines a finer topology. We recall that a \df{compact topometric space} is a triple $(X, \tau, \dtp)$, where $X$ is a set, $\tau$ is a compact Hausdorff topology on $X$ and $\dtp$ is a metric on $X$ which is \df{$\tau$-lower semicontinuous} (i.e., the set $\set{(x_1, x_2) \in X^2 : \dtp(x_1, x_2) \leq r}$ is $\tau \times \tau$-closed for every $r \geq 0$) and such that the topology defined by $\dtp$ is finer than $\tau$. We refer the reader to Ben Yaacov~\cite{BenYaacov2013b} for the general theory of topometric spaces.

We equip the type spaces $\widehat \tS_{\bar x}(T)$ with a topometric structure as follows. The topology $\tau$ is the logic topology defined earlier: namely, pointwise convergence on formulas. We recall from \cite{BenYaacov2017a}*{Section~7} the metric $\dtp$ on $\widehat \tS_{\bar x}(T)$ defined by:
\begin{equation}
  \label{eq:dtp1}
    \dtp(p, q) \leq s \iff \forall \phi \in \cF \quad  \big( \inf_{\bar y} \big( (d(\bar x, \bar y) \dotminus s ) \vee |\phi(\bar y) - \phi(p)| \big) \big)^q = 0.
  \end{equation}
  This definition is somewhat cumbersome and in \cite{BenYaacov2017a} it is only verified that $\dtp$ is a metric on the set of realizable types. We will give an equivalent definition which is easier to handle in some situations and, in particular, is obviously symmetric. We define for $p, q \in \widehat \tS_{\bar x}(T)$:
  \begin{multline}
    \label{eq:dtp2}
    \dtp(p, q) < s \iff \forall U \ni p, V \ni q \text{ $\tau$-open} \ \exists M \models T, \bar a, \bar b \in M^{\bar x} \\
     \tp(\bar a) \in U \And \tp(\bar b) \in V \And d(\bar a, \bar b) < s.
  \end{multline}
Our first task is to reconcile the two definitions.
\begin{prop}
  \label{p:equiv-dfn-topometric}
  The metrics defined by \eqref{eq:dtp1} and \eqref{eq:dtp2} are equal and $(\widehat \tS_{\bar x}(T), \tau, \dtp)$ is a compact topometric space.
\end{prop}
\begin{proof}
  For the first two paragraphs of the proof, we denote the metric defined in \eqref{eq:dtp1} by $\dtp'$. First we check that $\dtp \leq \dtp'$. To that end, suppose that $\dtp'(p, q) \leq s$ and fix $\eps > 0$ in order to show that $\dtp(p, q) < s + \eps$. Let $U = \oset{\phi < r}$ and $V$ be given. By decreasing $\eps$, we may assume that $\phi(p) < r - \eps$. From the density of realizable types and \eqref{eq:dtp1}, we know that there exists $M \models T$ and $\bar a \in M^{\bar x}$ such that $\tp \bar a \in V$ and
  \begin{equation*}
     M \models \inf_{\bar y} \big( (d(\bar a, \bar y) \dotminus s ) \vee |\phi(\bar y) - \phi(p)| \big) < \eps, 
   \end{equation*}
   i.e., there exists $\bar b$ such that $d(\bar a, \bar b) < s + \eps$ and $|\phi(\bar b) - \phi(p)| < \eps$, showing that $\tp \bar b \in V$ and $\dtp(p, q) < s + \eps$, as required.

   Next we show that $\dtp' \leq \dtp$. Suppose that $\dtp(p, q) < s$ in order to show that $\dtp'(p, q) \leq s$. Let $\phi \in \cF$ and $\eps > 0$ be given. Denote by $\psi(\bar x)$ the formula on the right-hand side of \eqref{eq:dtp1}. Using \eqref{eq:dtp2}, find $M \models T$ and $\bar a, \bar b \in M^{\bar x}$ such that $d(\bar a, \bar b) < s$, $|\phi(\bar b) - \phi(p)| < \eps$, and $|\psi(\bar a) - \psi(q)| < \eps$. It is easy to see now that $\psi(\bar a) < \eps$, implying that $\psi(q) < 2 \eps$. As $\eps$ was arbitrary, this shows that $\psi(q) = 0$ as desired.

   Next we check that $\dtp$ is a metric. It is obvious that $\dtp(p, p) = 0$. Suppose next that $p \neq q$ in order to show that $\dtp(p, q) > 0$. Let $\phi$ be a formula such that $\phi(p) < 0$ and $\phi(q) > 1$. If $\dtp(p, q) = 0$, by \eqref{eq:dtp2}, for every $\eps$, there exist $\bar a, \bar b$ with $d(\bar a, \bar b) < \eps$ and $\phi(\bar a) < 0$, $\phi(\bar b) > 1$, contradicting the uniform continuity of $\phi$.

   That $\dtp$ is symmetric follows directly from \eqref{eq:dtp2}. Next we verify the triangle inequality. Suppose that $\dtp(p_1, p_2) < s_1$ and $\dtp(p_2, p_3) < s_3$ in order to show that $\dtp(p_1, p_3) \leq s_1 + s_3$. Let $U_1=\oset{\phi_1 < r} \ni p_1$ and $U_3 = \oset{\phi_3 < r} \ni p_3$ be given $\tau$-open sets and let $\eps$ be arbitrary such that $\phi_1(p_1), \phi_3(p_3) < r - \eps$. Let
   \begin{align*}
     \psi_1(\bar x) &=  \inf_{\bar y} \big( (d(\bar x, \bar y) \dotminus s_1 ) \vee |\phi_1(\bar y) - \phi_1(p_1)| \big), \\
     \psi_3(\bar x) &=  \inf_{\bar y} \big( (d(\bar x, \bar y) \dotminus s_3 ) \vee |\phi_3(\bar y) - \phi_3(p_3)| \big).
   \end{align*}
   From \eqref{eq:dtp1}, we know that $\psi_1(p_2)=\psi_3(p_2) = 0$. By Lemma~\ref{l:realizable-dense}, there exists a model $M$ and $\bar b \in M^{\bar x}$ such that $\psi_1(\bar b) < \eps$ and $\psi_2(\bar b) < \eps$. Then there exist $\bar a, \bar c \in M^{\bar x}$ such that $\phi_1(\bar a) < r$, $\phi_3(\bar c) < r$, $d(\bar a, \bar b) < s_1 + \eps$, $d(\bar b, \bar c) < s_3 + \eps$. By the triangle inequality in $M$, $d(\bar a, \bar c) < s_1 + s_3 + 2\eps$. Thus $\tp \bar a \in U$, $\tp \bar c \in V$ and by \eqref{eq:dtp2} and the fact that $\eps$ was arbitrary, we have that $\dtp(p_1,p_3) \leq s_1+s_3$.
   
   That $\dtp$ is $\tau$-lower semicontinuous follows directly from \eqref{eq:dtp1}. We finally check that the $\dtp$-topology refines $\tau$. Let $(p_i)_i$ be a net that $\dtp$-converges to $p$. We need to check that for every formula $\phi$, $\phi(p_i) \to \phi(p)$. Let $\eps > 0$ be given and let $\delta > 0$ be such that for all models $M$ and $\bar a, \bar b \in M^{\bar x}$, $d(\bar a, \bar b) < \delta \implies |\phi(\bar a) - \phi(\bar b)| < \eps$. We have that for all sufficiently large $i$, there exist $M$, $\bar a, \bar b \in M^{\bar x}$ such that $|\phi(p_i) - \phi(\bar a)| < \eps$, $|\phi(p) - \phi(\bar b)| < \eps$, and $d(\bar a, \bar b) < \delta$, implying that $|\phi(p_i) - \phi(p)| < 3 \eps$. This concludes the proof of the proposition.
 \end{proof}

   Note that $\dtp(p, q) = \infty$ iff there exists a \emph{sentence} $\phi$ such that $\phi^p \neq \phi^q$ (this can happen  when the theory $T$ is not complete). Another important property of the metric $\dtp$ that follows directly from the definition is that for all $M$ and all $\bar a, \bar b \in M^{\bar x}$, we have that $\dtp(\tp \bar a, \tp \bar b) \leq d(\bar a, \bar b)$.

   We will say that a formula $\phi(\bar x)$ is \df{$1$-Lipschitz} if its interpretation $\phi \colon M^{\bar x} \to \R$ is a $1$-Lipschitz function for any model $M$. Equivalently, $\phi$ respects the continuity modulus $\DeltaL$ defined by $\DeltaL(r) = r$. We denote by $\cF_1$ the collection of $1$-Lipschitz formulas in the fragment $\cF$. The following proposition gives yet another useful equivalent definition for $\dtp$. A similar formula for $\Loo$ fragment was proved by Ben Yaacov~\cite{BenYaacov2013b}.

\begin{prop}
  \label{p:dtp-alt-def}
  \begin{enumerate}
  \item \label{i:p:dtp:1} Let $\Delta$ be a continuity modulus and let $\phi(\bar x)$ be a formula. Then $\phi$ respects $\Delta$ as a formula iff $\phi$ respects $\Delta$ as a function $(\tS_{\bar x}(T), \dtp) \to \R$.
    
  \item \label{i:p:dtp:2} For all $p, q \in \widehat \tS_{\bar x}(T)$,
      \begin{equation*}
    \dtp_\cF(p, q) = \sup_{\phi \in \cF_1} |\phi(p) - \phi(q)|.
  \end{equation*}

  \end{enumerate}
\end{prop}
\begin{proof}
  \ref{i:p:dtp:1} For the $(\Leftarrow)$ direction, note that for all $M \models T$ and $\bar a, \bar b \in M^{\bar x}$, we have:
  \begin{equation*}
    |\phi(\bar a) - \phi(\bar b)| = |\phi(\tp \bar a) - \phi(\tp \bar b)| \leq \Delta(\dtp(\tp \bar a, \tp \bar b)) \leq
    \Delta(d(\bar a, \bar b)).
  \end{equation*}
  For the $(\Rightarrow)$ direction, fix two types $p, q \in \tS_{\bar x}(T)$. Let $\eps > 0$. Find a model $M \models T$ and $\bar a, \bar b \in M^{\bar x}$ such that $|\phi(p) - \phi(\bar a)| < \eps$, $|\phi(q) - \phi(\bar b)| < \eps$, and $d(\bar a, \bar b) < \dtp(p, q) + \eps$. Then
  \begin{equation*}
    |\phi(p) - \phi(q)| \leq |\phi(\bar a) - \phi(\bar b)| + 2 \eps \leq \Delta(d(\bar a, \bar b)) + 2 \eps \leq
    \Delta(\dtp(p, q) + \eps) + 2\eps.
  \end{equation*}
  Taking $\eps \to 0$, we obtain the result.

  \ref{i:p:dtp:2} If $\phi$ is a $1$-Lipschitz formula, it follows from \ref{i:p:dtp:1} that $|\phi(p) - \phi(q)| \leq \dtp(p, q)$.
  
  Next, suppose that $\dtp(p, q) > s$. By \eqref{eq:dtp1}, there exists a formula $\phi(\bar y)$ such that denoting
  \begin{equation*}
    \theta(\bar x) = \inf_{\bar y}\ (d(\bar x, \bar y) \dotminus s) \vee |\phi(\bar y) - \phi(p)|,
  \end{equation*}
  we have $\theta(q) = r > 0$. Let
  \begin{equation*}
    \psi(\bar x) = \inf_{\bar y} \ d(\bar x, \bar y) \vee (s/r) |\phi(\bar y) - \phi(p)|
  \end{equation*}
  and note that $\psi(\bar x)$ is $1$-Lipschitz. Note also that if $p'$ is a realizable type, then $\psi(p') \leq (s/r)|\phi(p) - \phi(p')|$ (by taking $\bar y = \bar x$ in the $\inf$); taking a net of realizable types $p'$ converging to $p$ yields that $\psi(p) = 0$. On the other hand, we will check that $\psi(q) \geq s$. Let $\eps > 0$ and let $M \models T$ and $\bar a \in M^{\bar x}$ be such that $|\psi(q) - \psi(\bar a)| < \eps$ and $\theta(\bar a) > r - \eps$. Then we see that
  \begin{equation*}
    \psi(q) \geq \psi(a) - \eps \geq \frac{s(r - \eps)}{r} - \eps
  \end{equation*}
and letting $\eps \to 0$, we obtain $\psi(q) \geq s$.
\end{proof}


\section{Omitting types and atomic models}
\label{sec:omitt-types-atom}

\subsection{Isolated types and atomic models}
\label{sec:isol-types-atom}

If $p \in \tS_n(T)$ and $\delta > 0$, we will denote by $B_\delta(p)$ the open $\dtp$-ball around $p$ of radius $\delta$. A type $p \in \tS_n(T)$ is called \df{isolated} if it belongs to the $\tau$-interior of $B_\delta(p)$ for every $\delta > 0$ (or, in other words, if $\tau$ and the $\dtp$-topology coincide at $p$). This is equivalent to the formally weaker condition that $B_\delta(p)$ has non-empty $\tau$-interior for every $\delta$ \cite{BenYaacov2017a}*{Lemma~7.4}. A model $M$ is called \df{atomic} if for every $n$, all $n$-types realized in $M$ are isolated. We have the following basic lemma.
\begin{lemma}
  \label{l:compl-isolated-types}
  The set of isolated types in $\tS_n(T)$ is $\dtp$-closed and $\tau$-$G_\delta$.
\end{lemma}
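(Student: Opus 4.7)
The plan is to handle the two claims separately, in both cases relying only on the triangle inequality for $\dtp$ together with the characterization (from \cite{BenYaacov2017a}*{Lemma~7.4}, recalled just above) that $p \in \tS_n(T)$ is isolated if and only if $B_\delta(p)$ has non-empty $\tau$-interior for every $\delta > 0$. For $\dtp$-closedness, I would take a sequence $(p_k)_k$ of isolated types $\dtp$-converging to some $p \in \tS_n(T)$ and verify that $p$ is isolated as follows. Given $\delta > 0$, fix $k$ with $\dtp(p_k, p) < \delta/2$; the triangle inequality gives $B_{\delta/2}(p_k) \sub B_\delta(p)$, and by isolation of $p_k$ the left-hand side has non-empty $\tau$-interior. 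Hence $B_\delta(p)$ has non-empty $\tau$-interior, and since $\delta$ was arbitrary, $p$ is isolated.

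For the $\tau$-$G_\delta$ claim, for each $k \in \N$ I would introduce the set $J_k$ consisting of all $p \in \tS_n(T)$ that admit a $\tau$-open neighborhood of $\dtp$-diameter at most $1/k$. Each $J_k$ is $\tau$-open by inspection: if $p \in J_k$ with witness $U$, then every $q \in U$ has the same $U$ as a witness, so $U \sub J_k$. The set of isolated types then coincides with $\bigcap_k J_k$. Indeed, if $p$ is isolated, then for each $k$ there is a $\tau$-open $U \ni p$ contained in $B_{1/(3k)}(p)$, and the triangle inequality bounds the $\dtp$-diameter of $U$ by $2/(3k) < 1/k$, so $p \in J_k$. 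Conversely, if $p \in \bigcap_k J_k$ and $\delta > 0$, picking $k$ with $1/k < \delta$ yields a $\tau$-open neighborhood of $p$ of $\dtp$-diameter at most $1/k$, which is therefore contained in $B_\delta(p)$; this gives the required non-empty $\tau$-interior and shows that $p$ is isolated.

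No substantial obstacle is expected: both parts are essentially formal manipulations of the topometric structure, and the only thing to get right is the bookkeeping of constants so that $\tau$-open witnesses to isolation and $\dtp$-balls nest correctly under the triangle inequality.
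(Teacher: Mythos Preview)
Your proof is correct. The $\dtp$-closedness argument is essentially identical to the paper's. For the $\tau$-$G_\delta$ claim, the paper takes a slightly more conceptual shortcut: it observes that the isolated types are precisely the points of continuity of the identity map $(\tS_n(T),\tau)\to(\tS_n(T),\dtp)$ and then cites the general fact (e.g.\ \cite{Kechris1995}*{Proposition~3.6}) that the set of continuity points of any map into a metric space is $G_\delta$. Your sets $J_k$ are exactly the oscillation sets one writes down when proving that general fact, so you have simply unpacked the citation; this makes your argument self-contained at the cost of a few extra lines, while the paper's version highlights the underlying phenomenon but relies on an external reference.
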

\begin{proof}
  Let $I_n = \set{p \in \tS_n(T) : p \text{ is isolated}}$. First it is clear that $I_n$ is $G_\delta$ in $\tau$ because by definition, $I_n$ is exactly the set of points of continuity of the identity map $(\tS_n(T), \tau) \to (\tS_n(T), \dtp)$ (see, e.g., \cite{Kechris1995}*{Proposition~3.6}).

  Suppose now that $p_k \xrightarrow{\dtp} p$ and that $p_k \in I_n$ for all $k$. Let $\eps > 0$. Then there is $k$ and $\delta$ such that $B_\delta(p_k) \sub B_\eps(p)$. But by hypothesis, $B_\delta(p_k)$ has non-empty $\tau$-interior, and therefore, so does $B_\eps(p)$. By the remark above, this is sufficient to conclude that $p$ is isolated.
\end{proof}

An important property of atomic models is their uniqueness. The following standard fact is proved by the usual back and forth argument.
\begin{prop}
  \label{p:uniqueness-atomic}
  Let $M$ and $N$ be separable $L$-structures and suppose that there is a fragment $\cF$ such that $M \equiv_\cF N$ and $M$ and $N$ are $\cF$-atomic. Then $M \cong N$. \qed
\end{prop}

\subsection{Omitting types}
\label{sec:omitting-types}

The omitting types theorem is a fundamental tool in model theory and one of the few that do not depend on compactness. The version for classical $\Lomo$ logic is well known. In the continuous setting, the theorem (with a somewhat different formulation) is due to Eagle~\cite{Eagle2014}. The statement below in the case for finitary continuous logic is due to Ben Yaacov. We have preferred to include the proof as we think it is shorter and easier to follow than the one in \cite{Eagle2014}. We also use some of the constructions in defining the fragment topologies in the next section.

We fix a fragment $\cF$ and a countable $\cF$-theory $T$. If $\Xi \sub \tS_{\bar x}(T)$, we will say that a model $M \models T$ \df{omits} $\Xi$ if no type in $\Xi$ is realized in $M$.

\begin{theorem}[Omitting types]
    \label{th:omitting-types}
    Let $\cF$ be a fragment and let $T$ be an $\mathcal{F}$-theory. Suppose that for every $n$, we are given a $\tau$-meager and $\dtp$-open set $\Xi_n \sub \tS_{n}(T)$. Then there is a separable model $M \models T$ that omits all of the $\Xi_n$.
\end{theorem}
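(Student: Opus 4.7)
The plan is a Henkin-style construction combined with a Baire category argument in the $\omega$-type space. Expand the language by countably many new constants $C = \{c_i : i < \omega\}$, let $\bar x = (x_i)_{i < \omega}$, and construct a decreasing sequence of non-empty $\tau$-basic open sets $U_0 \supseteq U_1 \supseteq \cdots$ in $\widehat{\tS}_{\bar x}(T)$, each of the form $\oset{\phi < r}$ for some $\phi \in \cF$ involving finitely many variables $x_i$. By $\tau$-compactness of $\widehat{\tS}_{\bar x}(T)$, the intersection $\bigcap_k U_k$ is non-empty; any type $p$ in it will be realized by a separable $M \models T$ with dense subset $\{c_i^M\}$ that omits every $\Xi_n$.

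To set up the requirements, use $\tau$-meagerness of $\Xi_n$ to write $\Xi_n = \bigcup_k E_{n,k}$ with each $E_{n,k}$ $\tau$-closed and nowhere dense in $\widehat{\tS}_n(T)$. Enumerate the following countable collection of tasks:
\begin{enumerate}
\item (type omission) for each finite $\bar i \in \N^n$ and each $k$, the projection satisfies $\pi_{\bar i}(p) \notin E_{n,k}$;
\item (Henkin witnesses) for each $\cF$-formula $\phi(y, \bar x_{\bar j})$ and each rational $\eps > 0$, some index $i$ satisfies $\phi(x_i, \bar x_{\bar j})^p < (\inf_y \phi)(\bar x_{\bar j})^p + \eps$;
\item (infinitary correctness) for each $\Phi = \bigwedge_j \phi_j$ in $\cF$ (dually for $\bigvee$), each substitution of variables from $\bar x$, and each rational $\eps > 0$, some $j$ satisfies $\phi_j^p < \Phi^p + \eps$.
\end{enumerate}
At stage $k$, a suitable non-empty basic $\tau$-open refinement $U_{k+1} \subseteq U_k$ exists. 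For (i), one checks that $\pi_{\bar i}(\oset{\phi < r})$ equals $\oset{\inf_{\text{rest}} \phi < r}$, so has non-empty $\tau$-interior in $\widehat{\tS}_n(T)$ and therefore meets the $\tau$-dense open set $\widehat{\tS}_n(T) \setminus E_{n,k}$; pulling back yields a non-empty refinement. For (ii) and (iii), pick a realizable type $q \in U_k$ using Lemma \ref{l:realizable-dense}: for (ii), choose a Henkin witness $b$ inside a model realizing $q$ and a fresh index $i$ so that setting $x_i = b$ keeps us in $U_k$ while adding the new condition; for (iii), realizability of $q$ gives $\Phi^q = \inf_j \phi_j^q$, so some $j$ witnesses $\phi_j^q < \Phi^q + \eps$, and the refinement $U_k \cap \oset{\phi_j - \Phi < \eps}$ contains $q$.

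Once $p \in \bigcap_k U_k$ is fixed, build $M$ as the metric completion of the quotient of $\{c_i\}_{i<\omega}$ under the pseudometric $d^p(c_i, c_j)$, interpreting predicate and function symbols by the values given by $p$ and extending uniformly continuously to the completion using the moduli carried by formulas in $\cF$. Requirements (ii) and (iii) ensure $M \models T$ with $\{c_i^M\}$ dense in $M$. To verify $M$ omits every $\Xi_n$, fix $\bar a \in M^n$ and approximate by tuples $\bar c^{(\ell)}$ from $\{c_i^M\}^n$ with $\bar c^{(\ell)} \to \bar a$; by requirement (i), $\tp \bar c^{(\ell)} \notin \Xi_n$, and the bound $\dtp(\tp \bar a, \tp \bar c^{(\ell)}) \leq d(\bar a, \bar c^{(\ell)}) \to 0$ from Proposition \ref{p:equiv-dfn-topometric} combined with $\dtp$-closedness of $\tS_n(T) \setminus \Xi_n$ yields $\tp \bar a \notin \Xi_n$. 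The main obstacle is coordinating the three refinement steps consistently --- especially ensuring that the Henkin witnesses and the infinitary correctness conditions can both be secured while keeping the chain of basic $\tau$-open sets non-empty --- and then verifying that the term model genuinely satisfies $T$, which relies crucially on the uniform continuity moduli attached to formulas in the fragment $\cF$ and on the density of realizable types in $\widehat{\tS}_{\bar x}(T)$.
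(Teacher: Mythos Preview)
Your approach is sound and is the classical Henkin-style organization of the argument, but there is one technical slip worth flagging. Compactness of $\widehat{\tS}_{\bar x}(T)$ does \emph{not} guarantee that a decreasing chain of non-empty \emph{open} sets has non-empty intersection (consider $U_k = (0,1/k)$ in $[0,1]$). You must additionally arrange $\overline{U_{k+1}} \subseteq U_k$ at each stage---possible by regularity of the compact Hausdorff space, e.g.\ by shrinking $\oset{\phi < r}$ to $\oset{\phi < r-\delta}$---so that any $p \in \bigcap_k \overline{U_k}$ automatically lies in every $U_k$. Without this, the limit $p$ could sit on the boundary, which would spoil requirement~(i) (the complement of $E_{n,k}$ is open, not closed) and also the strict inequalities you need for~(ii) and~(iii). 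Once this is fixed the rest goes through as you describe.

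The paper organizes the same mathematics differently. Rather than an explicit stage-by-stage construction, it isolates the set $\tSen_{\bar x}(T)$ of $\omega$-types that enumerate a model---membership in $\tSen_{\bar x}(T)$ is exactly your requirements~(ii) and~(iii)---and proves as a separate lemma that this is a dense $G_\delta$ in $\widehat{\tS}_{\bar x}(T)$, hence Polish. It then shows that the restriction maps $\pi_s \colon \tSen_{\bar x}(T) \to \tS_{|s|}(T)$ are open continuous surjections (this is precisely your computation in~(i) that $\pi_{\bar i}(\oset{\phi<r})$ is the open set $\oset{\inf_{\text{rest}}\phi<r}$). The theorem then follows from a single application of Baire category in $\tSen_{\bar x}(T)$: the union $\bigcup_s \pi_s^{-1}(\Xi_{|s|})$ is meager because open maps reflect meagerness. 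The two proofs are equivalent at the level of ideas, but the paper's decomposition buys reusable infrastructure---the Polish space $\Mod(T)=\tSen_{\bar x}(T)$ and the openness of the projections are used throughout the later sections---whereas your direct construction is self-contained but has to redo the Tarski--Vaught-style induction (that the term model really satisfies every $\cF$-formula with the values dictated by $p$) inline.
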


Throughout this subsection we fix a fragment $\cF$ and a theory $T$ as in the theorem. We will write $\prec$ to denote elementary substructures with respect to $\cF$.

The proof of the theorem depends on two lemmas. To state the first of them we need to define a convenient subspace of $\tS_\omega(T)$. This is the space of the types each of whose realizations enumerates a countable, dense subset of an elementary substructure. One can easily show that this is an intrinsic property of the type. More precisely, let $\bar x = (x_0, x_1, \ldots)$ be a countably infinite tuple of variables and, for $p \in \tS_{\bar x} (T)$, define
\begin{equation}
  \label{eq:Sen}
  p \in \tSen_{\bar x}(T) \iff \forall \phi \in \cF \quad \big(\inf_y \phi(y, \bar x) \big)^p = \inf_j \phi^p(x_j, \bar x).
\end{equation}
We will say that a type $p \in \tS_{\bar x}(T)$ \df{enumerates a model} if $p \in \tSen_{\bar x}(T)$. Next we have the following simple lemma that justifies the name.
\begin{lemma}
  \label{l:enum-model}
  Let $M \models T$, $\bar a \in M^\omega$, and let $p = \tp \bar a$. Then
  \begin{equation*}
  p \in \tSen_{\bar x}(T) \iff \cl{\set{a_i : i \in \omega}} \prec M.    
  \end{equation*}
\end{lemma}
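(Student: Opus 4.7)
The plan is to read the definition \eqref{eq:Sen} as a Tarski--Vaught-style criterion. Setting $N = \cl{\set{a_i : i \in \omega}}$ and writing $\bar a_J = (a_{j_1}, \ldots, a_{j_k})$, the condition $p \in \tSen_{\bar x}(T)$ becomes
\begin{equation*}
\inf_{b \in M} \phi^M(b, \bar a_J) = \inf_{i \in \omega} \phi^M(a_i, \bar a_J)
\end{equation*}
for every $\phi(y, x_{j_1}, \ldots, x_{j_k}) \in \cF$. With this reformulation in hand, the $(\Leftarrow)$ direction is short: if $N \prec M$, then density of $\set{a_i}$ in $N$ together with the uniform continuity of $\phi$ in its first argument rewrites the right-hand side as $\inf_{b \in N} \phi^M(b, \bar a_J)$, and $\cF$-elementarity identifies this with the left-hand side.

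For the $(\Rightarrow)$ direction I would first check that $N$ is a substructure. For a function symbol $f$ of arity $k$ and any indices $j_1, \ldots, j_k$, the atomic formula $d(y, f(x_{j_1}, \ldots, x_{j_k}))$ lies in $\cF$ and its $\inf_y$ is $0$ in $M$; the unpacked condition then forces $f^M(\bar a_J)$ to be a limit of the $a_i$, hence in $N$. Uniform continuity of $f^M$ extends this to arbitrary tuples from $N$, so $N$ inherits a well-defined $L$-structure from $M$ (predicates are handled similarly via the atomic formula $|P(\bar x_J) - r|$, though closure of $N$ and continuity already make this automatic).

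Elementarity is then a Tarski--Vaught argument: for $\phi(y, \bar z) \in \cF$ and $\bar b \in N^{|\bar z|}$, I need $\inf_{y \in M} \phi^M(y, \bar b) = \inf_{y \in N} \phi^M(y, \bar b)$. Since $\cF$ is closed under quantifiers, $\psi(\bar z) := \inf_y \phi(y, \bar z)$ is itself an $\cF$-formula and carries a syntactic continuity modulus. The unpacked Sen condition directly yields the desired equality whenever $\bar b$ is a tuple from the dense sequence $\bar a$. For an arbitrary $\bar b \in N$, I approximate it by tuples $\bar a^{(n)}_J \to \bar b$ and pass to the limit on both sides, using the uniform continuity of $\phi$ in the parameter variables (to control the $M$-side) and of $\psi$ (to control the $N$-side). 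Once the equality holds for all $\bar b \in N$, the $\cF$-fragment version of the Tarski--Vaught test (proved by induction on formula complexity, with infinitary conjunctions handled by monotone convergence given the shared continuity modulus) concludes that $N \prec_\cF M$.

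The proof is essentially routine once the Sen condition is recognized as a reformulation of Tarski--Vaught along a dense sequence; the only delicate point is bookkeeping with the uniform continuity moduli, which is automatic for $\cF$-formulas but must be invoked to move from the countable sequence $\bar a$ to arbitrary parameters in the closure $N$.
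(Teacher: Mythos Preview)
Your proposal is correct and takes essentially the same approach as the paper: both recognize the condition \eqref{eq:Sen} as a Tarski--Vaught test along the dense sequence $\bar a$. The paper simply cites \cite{BenYaacov2008}*{4.5~Proposition} and remarks that the proof carries over to the $\Lomo$ setting, whereas you spell out the substructure verification and the approximation step explicitly; the content is the same.
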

\begin{proof}
  This is just a reformulation of the Tarski--Vaught test, see \cite{BenYaacov2008}*{4.5~Proposition}. In \cite{BenYaacov2008}, it is stated only for finitary continuous logic but the proof works equally well in the $\Lomo$ setting.
\end{proof}

\begin{lemma}
  \label{l:Sen-dense-Gdelta}
  $\tSen_{\bar x}(T)$ is a dense $G_\delta$ subset of $\widehat \tS_{\bar x}(T)$ in the topology $\tau$.
\end{lemma}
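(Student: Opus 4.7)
The statement splits naturally into showing $\tSen_{\bar x}(T)$ is $G_\delta$ and showing it is $\tau$-dense; I would handle the two halves separately. For the $G_\delta$ part, the key observation is that for any $\phi(y, \bar x) \in \cF$ the inequality $\inf_y \phi(y, \bar x) \leq \phi(x_j, \bar x)$ holds identically in every model, and therefore $(\inf_y \phi(y, \bar x))^p \leq \inf_j \phi^p(x_j, \bar x)$ automatically for every $p \in \widehat \tS_{\bar x}(T)$ (types preserve non-negativity of formulas, since they are Riesz homomorphisms with $p(\bOne) = 1$). Consequently, the condition in \eqref{eq:Sen} reduces to the reverse inequality, which in turn says: for every $n \in \N$ there is $j$ with $\phi^p(x_j, \bar x) - (\inf_y \phi(y, \bar x))^p < 1/n$. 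Since $\cF$ is closed under finitary connectives, quantifiers, and substitution of terms for variables, each $\chi_j \coloneqq \phi(x_j, \bar x) - \inf_y \phi(y, \bar x)$ lies in $\cF$, so by Proposition~\ref{p:Gelfand-duality} each $\oset{\chi_j < 1/n}$ is $\tau$-open. Taking a union over $j$ and then an intersection over the countably many pairs $(\phi, n)$ exhibits $\tSen_{\bar x}(T)$ as a $G_\delta$.

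For density, I would fix a non-empty basic open set $U = \oset{\phi < r}$ where $\phi \in \cF$ has finitely many free variables, indexed by some $I \sub \omega$. By Lemma~\ref{l:realizable-dense}, there exist $M \models T$ and $\bar a \in M^\omega$ with $\phi^M((a_i)_{i \in I}) < r$. The plan is to extract from $M$ a countable, $\cF$-elementary substructure containing $\set{a_i : i \in I}$ and to enumerate a dense subset of it placing the prescribed data at the prescribed positions. The substructure is produced by a standard Tarski--Vaught closure: starting from $S_0 = \set{a_i : i \in I}$, at each stage add to $S_n$, for every $\psi \in \cF$, every finite tuple $\bar c$ from $S_n$, and every $k \in \N$, one witness $w \in M$ with $\psi^M(w, \bar c) < \inf_y \psi^M(y, \bar c) + 1/k$. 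Countability of $\cF$ ensures that $S = \bigcup_n S_n$ is countable, and by construction $\cl S \prec M$ via Lemma~\ref{l:enum-model}. Enumerating $S$ as $\bar b = (b_0, b_1, \ldots)$ with $b_i = a_i$ for every $i \in I$ gives $\phi^{\tp \bar b} = \phi^M((a_i)_{i \in I}) < r$, and another application of Lemma~\ref{l:enum-model} places $\tp \bar b$ in $\tSen_{\bar x}(T) \cap U$.

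I do not expect a genuine obstacle. The only care required is bookkeeping: organizing the Tarski--Vaught closure so that the countably many demands (formula $\times$ tuple $\times$ precision) are all met at finite stages, and arranging the final enumeration of $S$ so that the prescribed initial elements end up at the required indices. The $G_\delta$ computation leans on $\cF$ being closed under the relevant syntactic operations, but this is built into the definition of a fragment.
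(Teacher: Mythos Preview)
Your density argument is fine, and in fact more direct than the paper's (which proves density of each auxiliary set $U_{\phi,r}$ and $V_\Phi$ separately and appeals to Baire category). The $G_\delta$ half, however, has a genuine gap.

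The set you write down,
\[
  G \;=\; \bigcap_{\phi,\,n}\ \bigcup_j\ \oset{\chi_j < 1/n},
\]
is indeed the set of all $p \in \widehat\tS_{\bar x}(T)$ satisfying condition~\eqref{eq:Sen}. But $\tSen_{\bar x}(T)$ is by definition the set of \emph{realizable} $p \in \tS_{\bar x}(T)$ satisfying~\eqref{eq:Sen}, and you have not shown that $G \subseteq \tS_{\bar x}(T)$. You cannot simply intersect with $\tS_{\bar x}(T)$, since the fact that $\tS_{\bar x}(T)$ is $G_\delta$ (Proposition~\ref{p:Sn-dense-Gdelta}) is deduced \emph{from} this lemma. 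And in general $G \supsetneq \tSen_{\bar x}(T)$: an approximately realizable type can satisfy the Tarski--Vaught condition~\eqref{eq:Sen} while failing to be realizable, because the infinitary connectives may misbehave, i.e.\ one may have $(\bigwedge_k \phi_k)^p < \inf_k \phi_k(p)$. For a concrete instance, take one-point models $M_k$ with predicates $P_j(c)=1$ for $j\neq k$ and $P_k(c)=0$, and the theory $T = \{\bigwedge_k P_k(c) = 0\}$; any $\tau$-cluster point $p$ of the corresponding $\xi_k$'s satisfies~\eqref{eq:Sen} (the models are singletons, so the quantifier condition is trivial) yet has $P_j(c)^p = 1$ for every $j$, hence is not realizable.

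This is exactly why the paper introduces the additional dense $G_\delta$ sets $V_\Phi = \{p : \Phi(p) = \inf_k \phi_k(p)\}$ for each infinitary $\Phi = \bigwedge_k \phi_k$ in $\cF$, and then proves by induction on formulas that any $p$ in $\bigcap_{\phi,r} U_{\phi,r} \cap \bigcap_\Phi V_\Phi$ is actually realized (by the closure of a realization of its $\Loo$-part). The $V_\Phi$ conditions are precisely what make the induction step for infinitary connectives go through. Your argument is missing this ingredient.
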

\begin{proof}
  First note that \eqref{eq:Sen} can be rewritten as: for all $\phi \in \cF$ and for all $r \in \Q$,
\begin{equation}
    \label{eq:Sen:1}
  \big(\inf_y \phi(y, \bar x) \big)^p < r \iff \exists j \ \phi^p(x_j, \bar x) < r.
\end{equation}
(The important direction is from left to right; the other is automatic for all $p$.) Let $U_{\phi, r}$ denote the $G_\delta$ subset of $\widehat \tS_{\bar x}(T)$ defined by \eqref{eq:Sen:1}.

If $\Phi = \bigwedge_k \phi_k$ is an infinitary formula in $\cF$, let
\begin{equation*}
  V_\Phi = \set{p \in \widehat \tS_{\bar x}(T) : \Phi(p) = \inf_k \phi_k(p)}.
\end{equation*}
It is clear that $V_\Phi$ can be written as: $p \in V_\Phi$ iff for all $r \in \Q$,
\begin{equation}
  \label{eq:Sen:infconn}
  \Phi(p) < r \implies \exists k \ \phi_k(p) < r,
\end{equation}
which shows that $V_\Phi$ is a $G_\delta$ set. It is also clear that all realizable types are in $V_\Phi$, so by Lemma~\ref{l:realizable-dense}, each $V_\Phi$ is dense. The proof of the lemma will be complete when we show that each of the sets $U_{\phi, r}$ is dense and that
\begin{equation}
  \label{eq:Sen:3}
  \tSen_{\bar x}(T) = \bigcap_{\phi, r} U_{\phi, r} \cap \bigcap_\Phi V_\Phi.
\end{equation}

First we check that each $U_{\phi, r}$ is dense. Let $\oset{\psi < s} \sub \widehat \tS_{\bar x}(T)$ be non-empty open. By Lemma~\ref{l:realizable-dense}, there exists a realizable $p$ with $\psi(p) < s$. Let $\bar a \models p$ in some model $M$. We may assume that $\big(\inf_y \phi(y, \bar x) \big)^p < r$. Then there exists $b \in M$ such that $\phi(b, \bar a) < r$. Let $n$ be larger than the indices of all variables that appear in $\psi$ and also larger than $i_0, \ldots, i_{k-1}$. Finally, define $p' = \tp(a_0, \ldots, a_{n-1}, b, a_{n+1}, \ldots)$. It is clear that $p' \in \oset{\psi < s} \cap U_{\phi, r}$.

We finally verify \eqref{eq:Sen:3}. The $\sub$ inclusion being clear, we check the other. Let $p \in \widehat \tS_{\bar x}(T)$ belong to the intersection on the right-hand side. Let $\bar a$ be a realization of the $\Loo$ part of $p$ in some model $N$ (this means that $\phi(\bar a) = \phi(p)$ for every $\phi \in \cL_{\omega\omega}$). Such a realization exists by the compactness theorem. Let $M = \cl{\set{a_i : i \in \N}}$; we will check that $\bar a$ realizes all of $p$ in $M$ (this will imply, in particular, that $M$ is a model of $T$). We proceed to prove by induction on formulas that for every formula $\phi \in \cF$, $\phi^{M}(\bar a) = \phi(p)$. This is true by construction for atomic formulas. The induction step for finitary connectives follows from the definition of the type space. Let now $\phi(\bar x) = \inf_y \psi(y, \bar x)$. Suppose first that $\phi(p) < r$. As $p \in U_{\psi, r}$, we have that there exists $j$ with $\psi^p(x_j, \bar x) < r$. By the induction hypothesis, $\psi^M(a_j, \bar a) < r$, and so, $\phi^M(\bar a) < r$. Conversely, suppose that $\phi^M(\bar a) < r$. Then there is $b \in M$ such that $\psi^M(b, \bar a) < r$; as $\set{a_i : i \in \N}$ is dense in $M$, this means that there is $j$ such that $\psi^M(a_j, \bar a) < r$, and using that $p \in U_{\psi, r}$, this gives that $\phi(p) < r$, concluding the argument for quantifiers. The induction step for infinitary connectives follows from \eqref{eq:Sen:infconn}. This concludes the proof of the lemma.
\end{proof}

For each $s\in [\N]^{<\N}$, i.e., for each finite sequence of distinct natural numbers, we define a map 
$\pi_s \colon \tSen_{\bar x}(T) \to \tS_{|s|}(T)$ by 
\[\big( \phi(x_0,\ldots , x_{|s|-1}) \big)^{\pi_s(p)} = \big(\phi(x_{s_0},\ldots ,x_{s_{|s|-1}})\big)^p. \]

\begin{lemma}\label{l:projection-open}
For each $s\in [\N]^{<\N}$ the map $\pi_s|_{\tSen_{\bar x}(T)}$ is an open and continuous surjection for the topology $\tau$.
\end{lemma}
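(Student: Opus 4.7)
The plan is to verify continuity, surjectivity, and openness separately; continuity and surjectivity are straightforward, and the real content lies in openness. Continuity is immediate: a basic open set $\oset{\phi(x_0, \ldots, x_{|s|-1}) < r} \sub \tS_{|s|}(T)$ has preimage $\oset{\phi(x_{s_0}, \ldots, x_{s_{|s|-1}}) < r} \cap \tSen_{\bar x}(T)$ under $\pi_s$. Surjectivity follows from downward Löwenheim--Skolem: given $q \in \tS_{|s|}(T)$ realized as $\tp(\bar b)$ in some $N \models T$, close $\bar b$ under $\cF$-Skolem functions to obtain a separable $\cF$-elementary substructure $M \prec N$, enumerate a dense subset of $M$ as $(a_i)_{i \in \N}$ with $a_{s_j} = b_j$ for each $j < |s|$, and apply Lemma \ref{l:enum-model} to conclude that $\tp(\bar a) \in \tSen_{\bar x}(T)$ projects to $q$.

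The heart of the argument is openness. Consider a basic open $U = \oset{\phi < r} \cap \tSen_{\bar x}(T)$, where $\phi = \phi(x_{j_0}, \ldots, x_{j_{k-1}})$. Partition $\{j_0, \ldots, j_{k-1}\}$ into the indices lying in $s$ and the remaining ones, enumerated as $t_1, \ldots, t_q$. Let $\phi'(x_0, \ldots, x_{|s|-1}, z_1, \ldots, z_q)$ be obtained from $\phi$ by substituting $x_j$ for $x_{s_j}$ and $z_\ell$ for $x_{t_\ell}$, and set $\psi \coloneqq \inf_{z_1} \cdots \inf_{z_q} \phi'$. I claim $\pi_s(U) = \oset{\psi < r}$, which is open in $\tS_{|s|}(T)$. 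The inclusion $\sub$ is immediate: for $p \in U$ realized by an enumeration $(a_i)$, taking $z_\ell \coloneqq a_{t_\ell}$ bounds $\psi^{\pi_s(p)} \leq \phi^p < r$. For $\supseteq$, realize $q \in \oset{\psi < r}$ in a separable $\cF$-elementary $M$ by $\bar b$; since $\psi(\bar b) < r$ in $M$, pick $c_1, \ldots, c_q \in M$ with $\phi'(\bar b, \bar c) < r$, enumerate a dense subset of $M$ as $(a_i)$ with $a_{s_j} = b_j$ and $a_{t_\ell} = c_\ell$, and apply Lemma \ref{l:enum-model} to get $p \coloneqq \tp(\bar a) \in U$ with $\pi_s(p) = q$. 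Openness of $\pi_s$ on arbitrary open sets then follows by taking unions.

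The main subtle point is setting up the correct substitution and quantification pattern defining $\psi$ so that the equality $\pi_s(U) = \oset{\psi < r}$ holds; beyond this bookkeeping, no serious obstacles are expected.
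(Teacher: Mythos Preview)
Your proposal is correct and follows essentially the same route as the paper: continuity is immediate, surjectivity comes from downward L\"owenheim--Skolem plus Lemma~\ref{l:enum-model}, and openness is shown by proving that the image of a basic open set $\oset{\phi < r}$ equals $\oset{\psi < r}$, where $\psi$ is obtained from $\phi$ by existentially quantifying over the variables of $\phi$ whose indices do not lie in $s$. Your treatment of the variable substitution defining $\psi$ is slightly more explicit than the paper's, but the argument is the same.
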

\begin{proof}
  Continuity is clear, so we proceed to prove that $\pi$ is surjective and open. Let $p \in \tS_n(T)$; let $M \models T$ and $\bar a \in M^n$ be such that $\tp \bar a = p$. By the downward Löwenheim--Skolem theorem, we may assume that $M$ is separable. Let $\bar b$ be a dense sequence in $M$ with $b_i = a_{s_i}$ for $i < n$. Then $\tp \bar b \in \tSen_{\bar x}(T)$ and $\pi_s(\tp \bar b) = p$.

  To check that $\pi_s$ is open, let $\oset{\phi(\bar x) < r}$ be a basic open set in $\tSen_{\bar x}(T)$. Let $x_{i_0}, \ldots, x_{i_{k-1}}$ be all variables that appear in $\phi$ and are not among $x_{s_0}, \ldots, x_{s_{n-1}}$. We claim that
  \begin{equation*}
    \pi_s(\oset{\phi < r}) = \oset{\inf_{x_{i_0}, \ldots, x_{i_{k-1}}} \phi < r}.
  \end{equation*}
  We only check the inclusion $\supseteq$. Let $p$ belong to the right-hand side and let $\bar a$ be a realization of $p$ in a separable model $M$. Then there exist $b_0, \ldots, b_{k-1}$ in $M$ such that $\phi(\bar a, \bar b) < r$. Finally, we can complete $\bar a \bar b$ to a dense sequence $\bar c$ such that $\pi_s(\tp \bar c) = p$.
\end{proof}
Lemmas \ref{l:realizable-dense} and \ref{l:projection-open} together give us the following.
\begin{prop}
  \label{p:Sn-dense-Gdelta}
  For every $n$, the set of realizable types $\tS_n(T) \sub \widehat \tS_n(T)$ is dense $G_\delta$ and therefore a Polish space.
\end{prop}

We are finally ready to prove the omitting types theorem.
\begin{proof}[Proof of Theorem~\ref{th:omitting-types}]
  Let $\bar x$ be an infinite tuple of variables and consider the subset $A \sub \tSen_{\bar x}(T)$ defined by
  \begin{equation*}
    A = \bigcup_{s \in [\N]^{< \N}} \pi_s^{-1}(\Xi_{|s|}).
  \end{equation*}
  As the preimage of a meager set by an open map is meager, Lemma~\ref{l:projection-open} implies that $A$ is meager. As $\tSen_{\bar x}(T)$ is Polish, this implies that there is $p \in \tSen_{\bar x}(T) \sminus A$. Let $\bar a$ be a realization of $p$ and let $M = \cl{\set{a_i : i \in \N}}$. We claim that $M$ omits all of the $\Xi_n$. Suppose not; then there is $n$, some $q \in \Xi_n$, and $\bar b \in M^n$ such that $q = \tp \bar b \in \Xi_n$. As $\Xi_n$ is $\dtp$-open, there exists $\eps > 0$ such that $B_\eps(q) \sub \Xi_n$. As $\bar a$ is dense in $M$, there exist $s_0, \ldots, s_{n-1}$ such that $d(\bar b, (a_{s_0}, \ldots, a_{s_{n-1}})) < \eps$. Then $\tp (a_{s_0}, \ldots, a_{s_{n-1}}) \in \Xi_n$, contradicting the fact that $\pi_s(p) \notin \Xi_n$.
\end{proof}

\begin{remark}
  \label{rem:strong-omitting-types}
  We note that the proof above gives the following stronger version of Theorem~\ref{th:omitting-types} that will be used in the sequel. Namely, under the assumptions of the theorem, for comeagerly many $\xi \in \tSen_{\bar x}(T)$, the model coded by $\xi$ omits all of the $\Xi_n$.
\end{remark}


\section{Topologies generated by fragments}
\label{sec:topol-gener-fragm}

In the next sections, we are going to discuss the equivalence relation of isomorphism of separable models of a given theory $T$. In order to do this, we need to define a suitable Polish topology (or at least a standard Borel structure) on the set of these models. It is possible to do this in many different ways but the most convenient one for us will be to use the space $\tSen_{\bar x}(T)$ of types enumerating models defined in the previous section---it clearly codes all the separable models of $T$. It is worth noting at this point that this space and its standard Borel structure do not depend on the fragment $\cF$ that we have chosen; the topology, however, does: if the fragment has more formulas, then the topology has more open sets. By Lemma~\ref{l:Sen-dense-Gdelta}, this topology is Polish as long as the fragment $\cF$ contains all sentences in $T$. In order to avoid subscripts, it will also be convenient for us to identify the variable $x_i$ with the natural number $i$. With this convention, our setting is equivalent to the usual approach in descriptive set theory to consider structures defined on $\omega$. 
We will denote by $\Mod(T)$ the standard Borel space $\tSen_{\bar x}(T)$ and by $t_\cF$ the Polish topology on $\Mod(T)$ generated by the fragment $\cF$. 

If $\xi \in \Mod(T)$, we will denote by $M_\xi$ the model enumerated by $\xi$, i.e., the structure $\cl{\set{a_i : i \in \N}}$ for any realization $\bar a \models \xi$. We write $M \cong N$ if the models $M$ and $N$ are isomorphic and $\xi \cong \eta$ if $M_\xi \cong M_\eta$. We will also denote by $[\xi] = \set{\eta \in \Mod(T) : \xi \cong \eta}$ the isomorphism class of $\xi$. We will write $M \equiv_\cF N$ if the models $M$ and $N$ are elementarily equivalent with respect to $\cF$, that is, for all sentences $\phi \in \cF$, we have $\phi^M = \phi^N$.

\begin{prop}
	\label{p:ElementaryClosure}
	Let $T$ be an $\cF$-theory, $\xi, \eta \in \Mod(T)$. Then $M_\xi \equiv_\cF M_\eta$ if and only if $\overline{[\xi]}^{t_\cF}=\overline{[\eta]}^{t_\cF}$.
\end{prop}
\begin{proof}
  For any sentence $\phi \in \cF$ and $r \in \R$, the set $\set{\xi \in \Mod(T) : \phi^\xi = r}$ is invariant under isomorphism and closed in $\Mod(T)$, so the backward direction is clear.
  
  Assume now that $M_\xi \equiv_\cF M_\eta$. Fix $\phi(\bar{x}) \in \cF$, $r \in \R$, and $u \in \N^{\bar x}$ and suppose that $[\xi] \cap \oset{\phi(u) < r} \neq \emptyset$. Then
\begin{equation*}
  \big( \inf_{\bar{x}} \phi(\bar{x}) \big)^\eta = \big( \inf_{\bar{x}} \phi(\bar{x}) \big)^\xi < r,
\end{equation*}
  so there exists $\bar{b}$ in $M_\eta^{\bar x}$ such that $\phi^{M_\eta}(\bar{b}) < r$. But this means that there exists $\zeta \in [\eta]$ such that $\phi^{\zeta}(u) < r$, i.e., $[\eta] \cap \oset{\phi(u) < r} \neq \emptyset$. Thus, $\overline{[\xi]}^{t_\cF}=\overline{[\eta]}^{t_\cF}$.
\end{proof}

\begin{cor}
  \label{c:CategoricalClosed}
  For any $\xi \in \Mod(T)$, $\Th_\cF (M_\xi)$ is $\aleph_0$-categorical if and only if $[\xi]$ is closed in the topology $t_\cF$.
\end{cor}
\begin{proof}
  If $\Th_F(M_\xi)$ is  $\aleph_0$-categorical, then $[\xi] = \set{\eta : M_\eta \equiv_\cF M_\xi}$, which is a closed set. The converse follows from Proposition~\ref{p:ElementaryClosure}.
\end{proof}

\begin{theorem}
	\label{th:AtomicG_delta}
	Let $\cF$ be a fragment, let $T$ be an $\cF$-theory and let $\xi \in \Mod(T)$. Then the following are equivalent:
    \begin{enumerate}
    \item $[\xi]$ is $\bPi^0_2(t_\cF)$;
    \item $[\xi]$ is $t_\cF$-comeager in $\cl[t_\cF]{[\xi]}$;
    \item $[\xi]$ is $t_\cF$-non-meager in $\cl[t_\cF]{[\xi]}$;
    \item $M_\xi$ is $\cF$-atomic.
    \end{enumerate}
\end{theorem}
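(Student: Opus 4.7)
The plan is to prove the four-way equivalence by a cyclic chain (i)$\Rightarrow$(ii)$\Rightarrow$(iii)$\Rightarrow$(iv)$\Rightarrow$(i), with the bulk of the work concentrated in the last two implications. Since $[\xi]$ is $t_\cF$-dense in $\cl[t_\cF]{[\xi]}$ by definition and $\cl[t_\cF]{[\xi]}$ is a closed subspace of the Polish space $(\Mod(T), t_\cF)$, hence itself Polish, (i)$\Rightarrow$(ii) is immediate from the Baire category theorem (a dense $\bPi^0_2$ subset of a Polish space is comeager), and (ii)$\Rightarrow$(iii) is trivial. For the remaining implications I will work inside the complete $\cF$-theory $T' = \Th_\cF(M_\xi)$: by Proposition~\ref{p:ElementaryClosure}, $\cl[t_\cF]{[\xi]}$ coincides with $\Mod(T') = \tSen_{\bar x}(T')$, and the restriction of $t_\cF$ to this set agrees with the topology $\tau$ from Section~\ref{sec:omitt-types-atom}.

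For (iii)$\Rightarrow$(iv), I suppose $M_\xi$ is not $\cF$-atomic and aim to show that $[\xi]$ is meager in $\cl[t_\cF]{[\xi]}$. By hypothesis some type $p = \tp^{M_\xi}(\bar a) \in \tS_n(T')$ is non-isolated, so there exists $\delta > 0$ with $B_\delta(p)$ of empty $\tau$-interior. Using that $\dtp$ is $\tau$-lower semicontinuous (Proposition~\ref{p:equiv-dfn-topometric}), the $\tau$-closure of $B_{\delta/2}(p)$ is contained in $\set{q : \dtp(p,q) \leq \delta/2} \subsetneq B_\delta(p)$, so $B_{\delta/2}(p)$ is $\tau$-nowhere dense and hence $\tau$-meager. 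Applying the strong form of the omitting types theorem (Remark~\ref{rem:strong-omitting-types}) to $T'$, with $\Xi_n = B_{\delta/2}(p)$ and $\Xi_m = \emptyset$ for $m \neq n$, yields a $t_\cF$-comeager set of $\eta \in \Mod(T')$ whose models $M_\eta$ omit every type in $B_{\delta/2}(p)$. But every $\eta \in [\xi]$ realizes $p$, so $[\xi]$ is disjoint from this comeager set and therefore meager in $\cl[t_\cF]{[\xi]}$, as required.

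For (iv)$\Rightarrow$(i), I will show that the set $A \subseteq \Mod(T')$ of $\eta$ with $M_\eta$ $\cF$-atomic is $t_\cF$-$G_\delta$. Granted this, Proposition~\ref{p:uniqueness-atomic} gives $A = [\xi]$ (any two separable $\cF$-atomic models of $T'$ are isomorphic), and since $\Mod(T')$ is $t_\cF$-closed in $\Mod(T)$, $[\xi]$ is $G_\delta$ in $\Mod(T)$ too. To exhibit $A$ as $G_\delta$, observe that for $\eta \in \tSen_{\bar x}(T')$ the types realized by $n$-tuples from $M_\eta$ lie in the $\dtp$-closure of $\set{\pi_s(\eta) : s \in [\N]^n}$, since any such tuple is a $d$-limit of tuples from the enumerating sequence and $\dtp(\tp \bar b, \tp \bar c) \leq d(\bar b, \bar c)$. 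By Lemma~\ref{l:compl-isolated-types} the set $I_n$ of isolated types in $\tS_n(T')$ is $\dtp$-closed, so atomicity of $M_\eta$ is equivalent to the countable family of conditions $\pi_s(\eta) \in I_n$ for all $n \in \N$ and $s \in [\N]^n$; since $I_n$ is additionally $\tau$-$G_\delta$ and each $\pi_s$ is $\tau$-continuous by Lemma~\ref{l:projection-open}, this exhibits $A$ as a countable intersection of $G_\delta$ sets.

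The main obstacle I expect is the topometric bookkeeping in (iii)$\Rightarrow$(iv): non-isolatedness of $p$ only supplies a $\dtp$-ball with empty $\tau$-interior, whereas omitting types demands a genuinely $\tau$-meager set, and bridging this gap relies on combining $\tau$-lower semicontinuity of $\dtp$ with the slight shrinking from $\delta$ to $\delta/2$. The other delicate point, the identification of $\cl[t_\cF]{[\xi]}$ with $\Mod(\Th_\cF(M_\xi))$ needed to apply omitting types to the correct complete theory, is handled by Proposition~\ref{p:ElementaryClosure}, and the technical manipulations with the projections $\pi_s$ and the set $I_n$ are routine once the material of Sections~\ref{sec:fragm-cont-l} and~\ref{sec:omitt-types-atom} is available.
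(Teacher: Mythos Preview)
Your proposal is correct and follows essentially the same cyclic route as the paper. The only noteworthy difference is in (iii)$\Rightarrow$(iv): the paper simply asserts that comeagerly many $\eta$ omit the non-isolated type $p_0$, whereas you spell out the topometric step the paper leaves implicit---shrinking to $B_{\delta/2}(p)$ and using lower semicontinuity of $\dtp$ to get a $\tau$-nowhere dense, $\dtp$-open set to which the strong omitting types theorem actually applies. Your (iv)$\Rightarrow$(i) argument via $\pi_s(\eta)\in I_n$ is exactly the paper's, just phrased in terms of the projection maps rather than $\tp^\eta(u)$.
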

\begin{proof}
  \begin{cycprf}
    \item[\impnnext] This is clear.

    \item[\impnext] Suppose that $M_\xi$ is not atomic and let $T' = \Th_\cF(M_\xi)$. Then there exists $n \in \N$ and a type $p_0 \in \tS_n(T')$ realized in $M_\xi$ which is not isolated. By Theorem~\ref{th:omitting-types} and Remark~\ref{rem:strong-omitting-types}, for comeagerly many $\eta \in \Mod(T') = \cl[t_\cF]{[\xi]}$, $M_\eta$ omits $p_0$, implying that $\eta \ncong \xi$.
      
    \item[\impfirst] By the uniqueness of atomic models (Proposition~\ref{p:uniqueness-atomic}), a model $M$ is isomorphic to $M_\xi$ iff $M$ is an $\cF$-atomic model of $T' = \Th_\cF(M_\xi)$. Let $I_n \sub \tS_n(T')$ be the set of isolated types. It follows from Lemma~\ref{l:compl-isolated-types} that $I_n$ is $G_\delta$ and that for every $n$ and every model $M$, the set $\set{\bar a \in M^n : \tp \bar a \in I_n}$ is closed. Thus we have:
  \begin{equation*}
    \eta \in [\xi] \iff \forall u \in \N^{<\N} \quad \tp^{\eta}(u) \in I_{|u|},
  \end{equation*}
  which is clearly a $G_\delta$ condition.
  \end{cycprf}
\end{proof}

A different, coarser topology $\tauqf$ on the space of models $\Mod(T)$ often considered in the literature is the one generated by the atomic formulas (rather than all formulas in a certain fragment). Then in order to ensure that this topology is Polish, one usually restricts to $\forall \exists$-theories, i.e., theories axiomatized by conditions of the form
\begin{equation*}
  \qsup_{\bar x} \qinf_{\bar y} \phi(\bar x, \bar y) \leq 0
\end{equation*}
with $\phi$ a quantifier-free finitary formula. This topology is harder to handle theoretically because of the lack of quantifiers and its heavy dependence on the choice of signature but is easier to compute with in practice. Fortunately, in some common situations, the topology $\tauqf$ coincides with the topology $t_0$ generated by the fragment $\cL_{\omega\omega}(L)$: namely, when the theory $T$ is model-complete. Recall that an $\Loo$-theory is \df{model-complete} if every embedding  between models of $T$ is elementary. Equivalently, every formula is equivalent to a formula of the form $\inf_{\bar y} \psi(\bar x, \bar y)$ with $\psi$ quantifier-free. In particular, if a theory eliminates quantifiers, it is model-complete. Thus we have the following corollary of Theorem~\ref{th:AtomicG_delta}, which is a convenient way to show the existence of $G_\delta$ isomorphism classes in the topology $\tauqf$.

\begin{cor}
  \label{c:tau-qf}
  Let $T_0$ be a $\forall \exists$-theory and let $T \supseteq T_0$ be an $\cL_{\omega\omega}(L)$-theory. Let $\xi \in \Mod(T)$ and consider the statements:
  \begin{enumerate}
  \item \label{i:tauqf:1} $[\xi]$ is $G_\delta$ in $(\Mod(T_0), \tauqf)$;
  \item \label{i:tauqf:2} $M_\xi$ is an atomic model of its $\cL_{\omega\omega}$-theory.
  \end{enumerate}
  Then \ref{i:tauqf:1} $\Rightarrow$ \ref{i:tauqf:2} and if $T$ is model-complete, we have equivalence.
\end{cor}
\begin{proof}
  \begin{cycprf}
  \item[\impnext] As the topology $t_0$ is finer than $\tauqf$, we have that $[\xi]$ is a $G_\delta$ set in $t_0$, so we can apply Theorem~\ref{th:AtomicG_delta}.
    
  \item[\impfirst] We will show that the topologies $\tauqf$ and $t_0$ coincide on $\Mod(T)$. Let $\set{\xi : \phi^\xi(u) < r}$ be a basic open set in $t_0$, where $\phi$ is a $\Loo$ formula and $u \in \N^k$. By model-completeness of $T$, there exists a quantifier-free formula $\psi(\bar x, \bar y)$ such that
    \begin{equation*}
      \phi(\bar b) = \qinf_{\bar y} \psi(\bar b, \bar y), \quad \text{ for all } M \models T, \bar b \in M^k.
    \end{equation*}
    Thus, 
    \begin{equation*}
      \phi^\xi(u) < r \iff \exists v \in \N^k \ \psi^\xi(u, v) < r
    \end{equation*}
    and the latter is clearly an open condition on $\xi$ in $\tauqf$.

    Thus $(\Mod(T), \tauqf)$ is Polish and therefore a $G_\delta$ subset of $(\Mod(T_0), \tauqf)$. Now Theorem~\ref{th:AtomicG_delta} implies that $[\xi]$ is $G_\delta$ in $(\Mod(T), t_0)$ and thus Polish in both $t_0$ and $\tauqf$. Therefore $[\xi]$ is $G_\delta$ in $\Mod(T_0)$.
  \end{cycprf}
\end{proof}

As an illustration of how Corollary~\ref{c:tau-qf} applies, we explain how to recover some results from the recent paper \cite{Cuth2019p} of Cúth, Dole\v{z}al, Doucha, and Kurka. We consider the signature of Banach spaces with function symbols for addition and multiplication by scalars and a predicate symbol for the norm. Let $T_0$ be the theory of Banach spaces (this is a universal theory because a substructure of a Banach space in this signature is still a Banach space). Some examples of $\aleph_0$-categorical Banach spaces are the Gurarij space and $L^p([0, 1])$ for $1 \leq p < \infty$. Moreover, the Gurarij space and $L^p([0, 1])$ for $p$ not an even integer greater than $2$ eliminate quantifiers. The $\aleph_0$-categoricity and quantifier elimination for the Gurarij space follow from its homogeneity and the Ryll-Nardzewski theorem. For $L^p$, it follows from Henson~\cite{Henson1976} that $L^p$ is $\aleph_0$-categorical as a Banach lattice and it is again a consequence of the Ryll-Nardzewski theorem that a reduct of an $\aleph_0$-categorical structure is
$\aleph_0$-categorical. For quantifier elimination for $L^p$, for $p \neq 4, 6, \ldots$, see \cite{Henson2002}*{Example~3.18} and Lusky~\cite{Lusky1978}. Finally, it is an unpublished result of Henson that the $L^p$ Banach spaces are model-complete for all $p \geq 1$. We are grateful to Ward Henson for explaining to us the subtleties of the model theory of the $L^p$ spaces and providing the references. See also \cite{BenYaacov2011} for more details.

Thus Corollary~\ref{c:tau-qf} implies the following.
\begin{cor}[\cite{Cuth2019p}]
  \label{c:CDDK}
  The isometry classes of the Gurarij space and $L^p$ for $p \geq 1$ are $G_\delta$ sets in $(\Mod(T_0), \tauqf)$.
\end{cor}

\begin{remark}
  \label{rem:Lp}
  The results in \cite{Cuth2019p} are more detailed. For example, they show in addition that these isomorphism classes are $G_\delta$-complete.
\end{remark}


\section{The isomorphism equivalence relation: the general case}
\label{sec:isomorphism}

Recall that if $\Gamma$ is a pointclass, a Borel equivalence relation $E$ on a standard Borel space $X$ is called \df{potentially $\Gamma$} if there exists a Polish topology $\tau$ on $X$ compatible with the Borel structure such that $E$ is in $\Gamma$ with respect to the topology $\tau \times \tau$.

The main results of the next two sections are generalizations to the metric setting of two theorems of Hjorth and Kechris~\cite{Hjorth1996} that characterize smooth and essentially countable isomorphism relations. The first one works for arbitrary metric structures and is just a combination of the characterizations of atomic and $\aleph_0$-categorical structures from the previous section and the well-known fact that an equivalence relation is smooth iff it is potentially $\bPi^0_2$ iff it is potentially closed.

We start by recalling a consequence of the metric version of the López-Escobar theorem from \cite{BenYaacov2017a}: if $X \sub \Mod(L)$ is Borel and invariant under isomorphism, then there exists a sentence $\phi \in \Lomo(L)$ such that $p \in X$ iff $\phi^p=0$. 

\begin{theorem}
	\label{th:AtomicSmooth}
    Let $T$ be a countable $\Lomo(L)$-theory and denote by $\cong_T$ the equivalence relation of isomorphism restricted to $\Mod(T)$. Then the following are equivalent:
	\begin{enumerate}
   		\item $\cong_T$ is smooth;
		\item There exists a fragment $\cF$ such that for every $\xi \in \Mod(T)$, the theory $\Th_{\cF}(M_\xi)$ is $\aleph_0$-categorical;
		\item There exists a fragment $\cF$ such that for every $\xi \in \Mod(T)$, $M_\xi$ is $\cF$-atomic.
	\end{enumerate}
\end{theorem}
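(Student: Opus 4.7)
The proof will proceed in the cycle (2) $\Rightarrow$ (3) $\Rightarrow$ (1) $\Rightarrow$ (2), combining the results of Section~\ref{sec:topol-gener-fragm} with the López-Escobar theorem recalled at the start of this section. Throughout, I may assume without loss of generality that any given fragment $\cF$ contains $T$, by enlarging it to the closure under the fragment operations of $\cF$ together with the sentences appearing in $T$.

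For (2) $\Rightarrow$ (3), Corollary~\ref{c:CategoricalClosed} yields that each isomorphism class $[\xi]$ is $t_\cF$-closed, and hence in $\bPi^0_2(t_\cF)$; Theorem~\ref{th:AtomicG_delta} then delivers that $M_\xi$ is $\cF$-atomic.

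For (3) $\Rightarrow$ (1), the uniqueness of atomic models (Proposition~\ref{p:uniqueness-atomic}) tells us that $\xi \cong_T \eta$ if and only if $M_\xi \equiv_\cF M_\eta$. Consider the evaluation map $\Phi \colon \Mod(T) \to \widehat \tS_0(T)$, $\xi \mapsto \Th_\cF(M_\xi)$. For each sentence $\phi \in \cF$ the coordinate $\xi \mapsto \phi^\xi$ is $t_\cF$-continuous, so $\Phi$ is Borel; its target is compact metrizable (since $\cF$ is countable) and hence standard Borel. Therefore $\Phi$ is a Borel reduction of $\cong_T$ to equality, witnessing smoothness.

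For the main implication (1) $\Rightarrow$ (2), I would use smoothness to fix a Borel reduction $f = (f_n)_{n \in \N} \colon \Mod(T) \to 2^\N$ of $\cong_T$ to equality. Each preimage $A_n := f_n^{-1}(\{1\})$ is a Borel, isomorphism-invariant subset of $\Mod(T)$, so the López-Escobar theorem produces an $\Lomo(L)$-sentence $\phi_n$ with $\xi \in A_n \iff \phi_n^\xi = 0$. Letting $\cF$ be any countable fragment of $\Lomo(L)$ containing $T$ and all the $\phi_n$, any two models $M_\xi, M_\eta$ satisfying $\Th_\cF(M_\xi) = \Th_\cF(M_\eta)$ must satisfy $\phi_n^\xi = \phi_n^\eta$ for every $n$, hence $f(\xi) = f(\eta)$, hence $\xi \cong_T \eta$. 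Thus $M_\xi$ is the unique separable model of $\Th_\cF(M_\xi)$ up to isomorphism, i.e., $\Th_\cF(M_\xi)$ is $\aleph_0$-categorical. The only mildly delicate point is the appeal to López-Escobar on subsets of $\Mod(T)$ rather than $\Mod(L)$, but since $\Mod(T)$ is itself invariant and Borel in $\Mod(L)$, every invariant Borel subset of $\Mod(T)$ extends to one of $\Mod(L)$ and the argument transfers without issue.
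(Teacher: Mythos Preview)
Your proof is correct and follows essentially the same route as the paper's. The only differences are cosmetic: in (iii) $\Rightarrow$ (i) you spell out the ``alternatively'' clause of the paper (using $\Th_\cF$ as a Borel complete invariant), and in (i) $\Rightarrow$ (ii) you argue $\aleph_0$-categoricity directly from the sentences $\phi_n$, whereas the paper first observes that the isomorphism classes become $t_\cF$-closed and then invokes Corollary~\ref{c:CategoricalClosed}; your remark about transferring L\'opez-Escobar from $\Mod(L)$ to $\Mod(T)$ is well taken.
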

\begin{proof}
  \begin{cycprf}
  \item[\impnext] Let $Y$ be a Polish space and let $f \colon \Mod(T) \to Y$ be a Borel mapping such that $\xi \cong \eta$ if and only if $f(\xi) = f(\eta)$. Let $\{U_n\}_{n \in \N}$ be a countable basis for $Y$. Then
\begin{equation*}
  M \cong N \iff \forall n\  \big( M \in f^{-1}(U_n) \Leftrightarrow N \in f^{-1}(U_n) \big).  
\end{equation*}
	Now, each $f^{-1}(U_n)$ is an invariant Borel set, so by the López-Escobar theorem cited above, for each $n \in \N$, there are $\Lomo(L)$-sentences $\phi_n$ and $\psi_n$ such that $f^{-1}(U_n)=\Mod(\phi_n = 0)$ and $f^{-1}(X \setminus U_n)=\Mod(\psi_n = 0)$. Let $\cF$ be the fragment generated by $\{\phi_0, \psi_0,\ldots \}$. Then in the Polish topology $t_\cF$, each isomorphism class is closed. Corollary~\ref{c:CategoricalClosed} then implies that for each $\xi \in \Mod(T)$, $\Th_\cF(M_\xi)$ is $\aleph_0$-categorical.
	
  \item[\impnext] This follows from Corollary~\ref{c:CategoricalClosed} and Theorem~\ref{th:AtomicG_delta}.
    
  \item[\impfirst] By Theorem \ref{th:AtomicG_delta}, for any $\xi \in \Mod(T)$, $[\xi]$ is $G_\delta$ in the topology $t_{\cF}$ . Then \cite{Gao2009a}*{Theorem 6.4.4} implies that $\cong_\phi$ is smooth. Alternatively, $\Th_{\cF}(M_\xi)$ is a complete isomorphism invariant for $\xi$.
  \end{cycprf}
\end{proof}

Next we aim to characterize $\bSigma^0_2$ isomorphism classes. We will make use of this in the next section.

For the proof of the next lemma, we will need Vaught transforms in the space $\Mod(T)$ as developed in \cite{BenYaacov2017a}. If $M$ is a separable structure, denote by 
\begin{equation*}
  D(M) = \set[\big]{\bar y \in M^\N : \set{y_i : i \in \N} \text{ is dense in } M}.
\end{equation*}
$D(M)$ is clearly a $G_\delta$ set in $M^\N$, and therefore a Polish space. If $\xi \in \Mod(T)$, denote by $\pi \colon D(M_\xi) \to [\xi]$ the surjective map given by
\begin{equation}
  \label{eq:def-pi}
  \phi^{\pi(y)}(u) = \phi^{M_\xi}(y(u_0), \ldots, y(u_{n-1})) \quad \text{for all } \phi \in \cF, u \in \N^n, y \in D(M_\xi).
\end{equation}
In order to describe open sets in $D(M)$, it will be convenient to have a pseudo-metric defined on tuples of elements of $M$ of different length. For $m, n \leq \omega$ with $\min(m, n) < \omega$ and $\bar a \in M^m, \bar b \in M^n$, we define:
\begin{equation*}
  d(\bar a, \bar b) = \max \set{d(a_i, b_i) : i < \min(m, n)}.
\end{equation*}
For $M \models T$, $r > 0$, and $u \in \N^{<\N}$, let $B^{D(M)}_r(u)= \{ y \in D(M): d(y,u) < r \}$.

The quantifiers $\exists^*$ and $\forall^*$ mean as usual ``for non-meagerly many'' and ``for comeagerly many'', respectively.
For $A \subseteq \Mod(T)$, $u \in \N^{<\N}$, and $k>0$, we define the sets $A^\triangle$ and $A^{\triangle u,k}$ by
\begin{align*}
\xi \in A^\triangle &\iff  \exists^* y \in D(M_\xi) \quad \pi(y) \in A; \\
\xi \in A^{\triangle u,k} &\iff  \exists^* y \in B^{D(M)}_{1/k}(u) \quad \pi(y) \in A.
\end{align*}
For a Polish space $X$ and a Baire measurable function $f \colon X \to \R$, we define $\infstar_{x \in X} f(x)$ by
\begin{equation*}
  \infstar_{x \in X} f(x) < s \iff \exists^* x \in X \quad f(x) < s, \quad \text{ for } s \in \R.
\end{equation*}
For a Borel subset $A \sub \Mod(T)$ and $k \in \N$, define the function $U_A^{*k} \colon \Mod(T) \times \N^k \to \R$ by
  \begin{equation*}
    U_A^{*k}(\xi, u) = \infstar_{y \in D(M_\xi)} \chi_{A}(\pi(y)) \vee kd(y,u).
  \end{equation*}
  Here $\chi_A$ denotes the characteristic function of $A$. Note also that
  \begin{equation}
    \label{eq:property:U}
  \big( \forall^* y \in B_{1/k}^{D(M_\xi)}(u) \ \pi(y) \in A \big) \iff U^{*k}_A(\xi, u) \geq 1.
\end{equation}
The main significance of the function $U^{*k}_A$ is that it can be captured by a formula. More precisely, the following holds.
  \begin{theorem}[\cite{BenYaacov2017a}*{Theorem~6.3}]
    \label{th:LopezEscobar}
    Let $T$ be a countable $\Lomo$ theory, let $A \sub \Mod(T)$ be a Borel subset, and let $k \in \N$. Then there exists a $\Lomo$ formula $\phi_{A, k}(\bar x)$ with $k$ free variables such that
    \begin{equation*}
      U^{*k}_A(\xi, u) = \phi_{A, k}^\xi(u), \quad \text{ for all } \xi \in \Mod(T), u \in \N^k.
    \end{equation*}
  \end{theorem}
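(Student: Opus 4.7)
The plan is to proceed by transfinite induction on the Borel rank of $A \sub \Mod(T)$, constructing the family $\set{\phi_{A,k} : k \in \N}$ simultaneously and keeping careful track of uniform bounds and continuity moduli. Since $U^{*k}_A$ takes values in $[0,1]$ (inherited from $\chi_A$) and is $k$-Lipschitz in the $u$-coordinates through the $k d(y,u)$ term in its definition, I would arrange that $\phi_{A,k}$ has bound $[0,1]$ and modulus of continuity $r \mapsto kr$. This is exactly the uniformity required for the infinitary connectives $\bigwedge$ and $\bigvee$ to remain legal $\Lomo$-formulas as the recursion progresses.

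The base case treats the basic $\Lomo$-open sets $A = \oset{\psi(v) < r}$ where $\psi \in \Lomo$ has arity $m$ and $v \in \N^m$. I would write $\phi_{A,k}$ down explicitly by unpacking \eqref{eq:def-pi}, which turns ``$\pi(y) \in A$'' into a condition on the finite subtuple $(y(v_0), \ldots, y(v_{m-1}))$. A Baire-category argument inside $D(M_\xi)$, combined with density of the enumeration, then lets one replace ``$\exists^* y$ in a basic open ball'' by an ordinary $\qinf$ over auxiliary tuples in $M_\xi$, giving $\phi_{A,k}$ as a $\qinf$-quantified expression built from $\psi$, the metric, and the appropriate indices.

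For the inductive step, countable unions are easy: since $\exists^*$ commutes with countable unions, $A = \bigcup_n A_n$ yields $\phi_{A,k} = \bigwedge_n \phi_{A_n,k}$ directly from the induction hypothesis. The delicate case is complementation, where I would invoke the standard Baire-category duality: $\forall^* y \in B^{D(M_\xi)}_{1/k}(u) \ \pi(y) \in A$ is equivalent to the assertion that for every $u' \in \N^{<\N}$ and every $j \geq k$ with $B^{D(M_\xi)}_{1/j}(u') \sub B^{D(M_\xi)}_{1/k}(u)$, one has $\neg \exists^* y \in B^{D(M_\xi)}_{1/j}(u') \ \pi(y) \in A^c$. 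This rewrites $\forall^*$-statements about $A$ as countable conjunctions of $\exists^*$-statements about $A^c$ at finer scales, which by the induction hypothesis are already captured by formulas.

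The main technical obstacle is the bookkeeping required to maintain, throughout the transfinite recursion, that the constructed $\phi_{A,k}$ respect a common bound and modulus of continuity, so that the infinitary connectives remain legitimate $\Lomo$-formulas. The complementation step inevitably passes from scale $1/k$ to finer scales $1/j$ with $j > k$, which is exactly why the family must be indexed by both $A$ and $k$: the induction supplies formulas at all scales simultaneously, and the Lipschitz constants grow in a controlled way that keeps the syntax of $\Lomo$ intact. Once this uniformity is secured, the classical López--Escobar argument, adapted to the Vaught-transform framework developed above, carries over systematically from the discrete setting.
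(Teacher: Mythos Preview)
The paper does not actually prove this theorem: it is stated as a citation of \cite{BenYaacov2017a}*{Theorem~6.3}, with only the remark that the slightly different choice of metric on tuples does not affect the validity of the original proof. So there is no proof in the paper to compare against.

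That said, your sketch is essentially the argument carried out in the cited reference: induction on the Borel complexity of $A$, with the base case handled by unwinding the definition of $\pi$ and the density of the enumeration, countable unions handled via $\bigwedge_n$ (using that $\exists^*$ distributes over countable unions), and complements handled via the Baire-category localisation that converts a $\forall^*$-statement over a ball into a countable conjunction of negated $\exists^*$-statements over smaller balls. Your emphasis on maintaining the common bound $[0,1]$ and the modulus $r \mapsto kr$ throughout the recursion is exactly the point that distinguishes the continuous case from the classical López--Escobar argument, and it is the right thing to track. One small cosmetic point: in the complementation step you should make explicit how the resulting formula at scale $k$ is assembled from the inductively given formulas $\phi_{A^c,j}$ at scales $j \geq k$ so that the final expression is genuinely $k$-Lipschitz in $\bar x$ rather than $j$-Lipschitz; this is routine (one truncates or composes with the metric appropriately), but it is the place where the bookkeeping you allude to has actual content.
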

  The statement of \cite{BenYaacov2017a}*{Theorem~6.3} uses a slightly different metric on tuples from our $d$ for the definition of $U^{*k}_A$ but the proof is still valid.

  We are finally ready to state our next lemma, which allows us to represent invariant $\bSigma^0_2$ sets for an arbitrary Polish topology on $\Mod(T)$ as $\bSigma^0_2$ sets for a topology of the form $t_\cF$ for a fragment $\cF$ in a uniform way.
\begin{lemma}
	\label{l:Sigma02}
	Let $\cF$ be a fragment and let $T$ be a countable $\cF$-theory. Let $t$ be a Polish topology on $\Mod(T)$ whose open sets are Borel subsets of $\Mod(T)$. Then there exists a fragment $\cF' \supseteq \cF$ such that for every $\bSigma^0_2(t)$-set $A \sub \Mod(T)$, every $u \in \N^{<\N}$, and $k>0$, we have that $A^\triangle, A^{\triangle u,k} \in \bSigma^0_2(t_\cF)$. In particular, if $\cong_T$ is potentially $\bSigma^0_2$, then there exists a fragment $\cF' \supseteq \cF$ such that every isomorphism class is $\bSigma^0_2(t_{\cF'})$.
\end{lemma}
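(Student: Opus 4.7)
The plan is to build $\cF'$ by applying Theorem~\ref{th:LopezEscobar} to the complements of a countable basis for $t$, and then to decompose the Vaught transforms of $\bSigma^0_2(t)$-sets into countable unions of $t_{\cF'}$-closed sets through a Baire-category calculation.

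I will first fix a countable basis $\set{V_n : n \in \N}$ for $t$; by hypothesis each $V_n$, and hence each $V_n^c$, is Borel in $\Mod(T)$. For each $n \in \N$ and $k \in \N_{>0}$, I apply Theorem~\ref{th:LopezEscobar} to $V_n^c$ to obtain a $\Lomo$-formula $\phi_{V_n^c,k}$ with $\phi_{V_n^c,k}^\xi(u) = U^{*k}_{V_n^c}(\xi, u)$, and I take $\cF'$ to be the (countable) fragment generated by $\cF$ together with all of these $\phi_{V_n^c,k}$.

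Given $A \in \bSigma^0_2(t)$, I write $A = \bigcup_j F_j$ with each $F_j$ being $t$-closed; since $\exists^*$ distributes over countable unions, this reduces the task to showing $F^{\triangle u,k} \in \bSigma^0_2(t_{\cF'})$ for each $t$-closed $F$ (the case of $A^\triangle$ corresponding to $u = \emptyset$, $k = 1$). Writing $F^c = \bigcup_m V_{g(m)}$, the key step will combine two Baire-category identities. First, using the basis property of $\set{B^{D(M_\xi)}_{1/k'}(u')}$ in $D(M_\xi)$ and the Baire property of Borel sets,
\begin{equation*}
F^{\triangle u,k} = \bigcup_{(u',k')} \set{\xi : \forall^* y \in B^{D(M_\xi)}_{1/k'}(u') \ \pi(y) \in F},
\end{equation*}
where the union is over countably many $(u',k')$ with $B^{D(M_\xi)}_{1/k'}(u') \sub B^{D(M_\xi)}_{1/k}(u)$; the $\xi$-dependence of this containment will be handled by restricting to $u' \supseteq u$ and $k' \geq k$, making the inclusion uniform in $\xi$. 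Second, since $\forall^*$ commutes with countable universal quantification (countable intersections of comeager sets are comeager), and since by the Baire property $\forall^* y \in B^{D(M_\xi)}_{1/k'}(u') \ \pi(y) \notin V_{g(m)}$ is equivalent to $U^{*k'}_{V_{g(m)}^c}(\xi, u') \geq 1$, each set in the above union equals
\begin{equation*}
\bigcap_m \set{\xi : \phi_{V_{g(m)}^c, k'}^\xi(u') \geq 1},
\end{equation*}
a countable intersection of $t_{\cF'}$-closed sets. Combining yields $F^{\triangle u,k} \in \bSigma^0_2(t_{\cF'})$, as required.

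The ``in particular'' assertion will follow by applying the main claim to isomorphism classes: if $\cong_T$ is potentially $\bSigma^0_2$ with compatible Polish witness $t$, then each $[\xi]$ is a section of $\cong_T$ and therefore in $\bSigma^0_2(t)$, and iso-invariance of $[\xi]$ gives $[\xi] = [\xi]^\triangle$ (since $\pi(y) \cong \xi$ for every $y \in D(M_\xi)$), whence the main claim places $[\xi]$ in $\bSigma^0_2(t_{\cF'})$. The main obstacle will be the Baire-category decomposition: verifying that the family $\set{B^{D(M_\xi)}_{1/k'}(u')}$ really furnishes a basis for $D(M_\xi)$ and that the ball-containment becomes $\xi$-uniform under the restriction $u' \supseteq u$, $k' \geq k$. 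Once these are in place, the rest of the argument is routine Boolean manipulation using the formulas $\phi_{V_n^c, k}$ provided by Theorem~\ref{th:LopezEscobar}.
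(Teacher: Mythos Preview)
Your overall architecture is sound and mirrors the paper's: build $\cF'$ from the López-Escobar formulas $\phi_{B,k}$ for $B$ ranging over a countable basis of $t$-closed sets, then unwind the Vaught transform of a $\bSigma^0_2(t)$ set via Baire category. The ``in particular'' clause is also handled correctly.

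There is, however, a genuine gap in your Baire-category decomposition. You claim that restricting to pairs $(u',k')$ with $u' \supseteq u$ and $k' \geq k$ makes the containment $B^{D(M_\xi)}_{1/k'}(u') \subseteq B^{D(M_\xi)}_{1/k}(u)$ uniform in $\xi$ while still giving the identity
\[
F^{\triangle u,k} = \bigcup_{u' \supseteq u,\ k' \geq k} \set{\xi : \forall^* y \in B^{D(M_\xi)}_{1/k'}(u') \ \pi(y) \in F}.
\]
The containment is indeed uniform, but the family $\set{B_{1/k'}(u') : u' \supseteq u,\ k' \geq k}$ is \emph{not} a $\pi$-basis for $B_{1/k}(u)$, so the forward inclusion fails. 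The problem is at points $y$ with $d(y,u)$ close to $1/k$: if $u' \supseteq u$ then $u'_i = u_i$ for $i < |u|$, so $y \in B_{1/k'}(u')$ forces $1/k' > d(y_i,u_i)$ for every $i < |u|$, which pins $1/k'$ above $d(y,u)$. Thus the radius cannot be made small, and a small open set $V \subseteq B_{1/k}(u)$ concentrated near the boundary need not contain any ball from your family. Consequently, if $\pi^{-1}(F)$ happens to be comeager only in such a $V$, your union misses $\xi$.

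The paper avoids this by letting $u'$ range over all of $\N^{<\N}$ and replacing your combinatorial constraint $u' \supseteq u$ by the metric condition $d^{M_\xi}(u',u) \leq 1/k$. This family \emph{does} form a $\pi$-basis for $B_{1/k}(u)$ (pick any $y$ in a given open $V \subseteq B_{1/k}(u)$, approximate $y|_n$ by some $u' \in \N^n$ using density, and take $k'$ large), and although the condition now depends on $\xi$, it is expressed by the atomic formula $d$ and is therefore $t_{\cF}$-closed. The final expression is then a countable union over $(n,u',k')$ of sets of the form
\[
\set{\xi : d^{M_\xi}(u',u) \leq 1/k} \cap \bigcap_m \set{\xi : \phi_{A_{n,m},k'}^\xi(u') \geq 1},
\]
each of which is $t_{\cF'}$-closed. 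Your argument goes through once you make this replacement.
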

\begin{proof}
  Let $\cB$ be a countable basis of closed sets for the topology $t$, so that every $t$-closed set is an intersection of elements of $\cB$. Let $\cF'$ be the fragment generated by $\cF$ and the formulas $\set{\phi_{B, k} : B \in \cB, k \in \N}$ as given by Theorem~\ref{th:LopezEscobar}. Let now $A \in \bSigma^0_2(t)$ be arbitrary and let $A_{n,m}$ for $n, m \in \N$ be such that $A_{n,m}\in \cB$ and $A=\bigcup_n \bigcap_m A_{n,m}$.
	Then we have:
    \begin{equation*}
      \begin{split}
        \xi \in A^{\triangle u,k} \iff&  \exists^* y \in B^{D(M_\xi)}_{1/k}(u) \ \exists n \ \forall m \quad \pi(y)\in A_{n,m} \\
        \iff& \exists n \ \exists^* y \in B^{D(M_\xi)}_{1/k}(u) \ \forall m \quad \pi(y) \in A_{n,m} \\
        \iff& \exists n \ \exists u' \in \N^{<\N} \ \Big( d^{M_\xi}(u', u) \leq 1/k \And \\
          &\qquad \exists k' \in \N \ \forall^* y \in B^{D(M_\xi)}_{1/k'}(u') \ \forall m \quad  \pi(y) \in A_{n,m} \Big) \\
          \iff& \exists n \ \exists u' \in \N^{<\N} \ \Big( d^{M_\xi}(u',u) \leq 1/k \And
            \exists k' \ \forall m \ \phi_{A_{n, m}, k'}^\xi(u') \geq 1 \Big).
      \end{split}
    \end{equation*}
As both sets $\set{\xi : d^{\xi}(u', u) \leq 1/k}$ and $\set{\xi :  \phi_{A_{n, m}, k'}^\xi(u') \geq 1}$ are $t_{\cF'}$-closed, we get that $A^{\triangle u, k} \in \bSigma^0_2(t_{\cF'})$.
\end{proof}

The following definition is important for characterizing $\bSigma^0_2$ isomorphism classes.
\begin{defn}
  \label{df:aleph0-rigid}
  Let $\cF$ be a fragment and let $T$ be an $\cF$-theory.
  We will say that a type $p \in \tS_k(T)$ is \df{rigid} if whenever $(M, \bar a)$ and $(N, \bar b)$ are two realizations of $p$ with $M$ and $N$ separable, then $M \cong N$.
\end{defn}

\begin{prop}
  \label{p:Sigma02-implies-rigid}
  Let $\cF$ be a fragment, let $T$ be an $\cF$-theory, and let $\xi \in \Mod(T)$. Suppose that $[\xi]$ is a $\bSigma^0_2$ set in $t_\cF$. Then there exists $k > 0$ such that the set
  \begin{equation*}
	\set{\bar a \in M_\xi^k : \tp_\cF(\bar a) \text{ is rigid}}
  \end{equation*}
	has non-empty interior in $M_\xi^k$.
\end{prop}
\begin{proof}
  Let $[\xi] = \bigcup_n A_n$, where each $A_n$ is a closed set in $t_\cF$. Write $M = M_\xi$ and let $\pi \colon D(M) \to [\xi]$ be the map defined by \eqref{eq:def-pi}. By the Baire category theorem, there exists $n_0 \in \N$ such that $\pi^{-1}(A_{n_0})$ has non-empty interior. For brevity, put $A = A_{n_0}$. We may assume that $\pi^{-1}(A)$ contains an open set $U$ of the form
\begin{equation*}
  U = \set{y \in D(M) : d(y, \bar a_0) < r}
\end{equation*}
for some $k \in \N$ and $\bar a_0 \in M^k$. We claim that for every $\bar a \in B_r(\bar a_0)$, $\tp_\cF \bar a$ is rigid.

Indeed, fix such an $\bar a$ and let $(N, \bar b) \equiv_\cF (M, \bar a)$ with $N$ separable. We will find an enumeration $z \in D(N)$ such that $\pi(z) \in \cl{\pi(U)} \sub A \sub [\xi]$, which will imply that $N \cong M$. Choose $z \in D(N)$ arbitrary such that $z|_k = \bar b$. Now given $n \in \N$, a formula $\phi(x_0, \ldots, x_{n-1}) \in \cF$, and $\eps > 0$, we need to find $y \in D(M)$ such that $d(y, a_0) < r$ and $|\phi^M(y|_n) - \phi^N(z|_n)| < \eps$. We may assume that $n \geq k$, $\phi \geq 0$ and $\phi^N(z|_n) = 0$. We have $\big(\inf_{\bar x} \phi(\bar b, x_k, \ldots, x_{n-1}) \big)^N = 0$. As $(M, \bar a) \equiv_\cF (N, \bar b)$, this implies that there exists $\bar e \in M^{n-k}$ such that $\phi^M(\bar a \bar e) < \eps$. Now it is enough to take $y|_n = \bar a \bar e$ and prolong it arbitrarily.
\end{proof}
  
We finish with a lemma that says that the collection of rigid types is not too complicated.
\begin{lemma}
  \label{l:rigid-pi11}
  Let $\cF$ be a fragment, let $T$ be an $\cF$-theory and suppose that the equivalence relation $\cong_T$ is Borel. Then for every $k \in \N$, the set
  \begin{equation*}
    \set{p \in \tS_k(T) : p \text{ is rigid}}
  \end{equation*}
  is $\bPi^1_1$.
\end{lemma}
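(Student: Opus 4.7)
The approach is to unfold the definition of rigidity, express it as a universal quantification over a Polish space of a Borel predicate, and then appeal to the standard pointclass calculus. The hypothesis that $\cong_T$ is Borel will enter essentially once.

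First I would observe that rigidity of $p$ can be reformulated in terms of codes:
\begin{equation*}
	p \text{ is rigid} \iff \forall \xi, \eta \in \Mod(T) \ \forall u, v \in \N^k \ \bigl(\tp^\xi(u) = p \And \tp^\eta(v) = p \implies \xi \cong_T \eta \bigr).
\end{equation*}
The $(\Rightarrow)$ direction is immediate. For $(\Leftarrow)$, given any two separable realizations $(M, \bar a), (N, \bar b)$ of $p$, one chooses dense enumerations of $M$ and $N$ whose first $k$ entries enumerate $\bar a$, resp.\ $\bar b$; by downward L\"owenheim--Skolem these yield $\xi, \eta \in \Mod(T)$ and $u, v \in \N^k$ with $\tp^\xi(u) = \tp^\eta(v) = p$ and $M_\xi \cong M$, $M_\eta \cong N$, so $\xi \cong_T \eta$ forces $M \cong N$.

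Next I would verify that the predicate inside the quantifier is Borel in $(p, \xi, u, \eta, v)$. For each $\phi \in \cF$, the map $p \mapsto \phi(p)$ is continuous on $\tS_k(T)$ by definition of the logic topology, and $\xi \mapsto \phi^\xi(u)$ is continuous in $t_\cF$ and hence Borel with respect to the standard Borel structure of $\Mod(T)$. Since the countable collection $\cF$ separates types in $\tS_k(T)$, the equality $\tp^\xi(u) = p$ is equivalent to the $\bPi^0_2$ condition $\bigwedge_{\phi \in \cF} \phi^\xi(u) = \phi(p)$ and in particular is Borel. Combined with the hypothesis that $\cong_T \sub \Mod(T)^2$ is Borel, the implication in the displayed equivalence defines a Borel subset $B \sub \tS_k(T) \times (\Mod(T) \times \N^k)^2$.

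The set of rigid types is then the universal projection of $B$ along the Polish fibre $(\Mod(T) \times \N^k)^2$, which is $\bPi^1_1$. There is no serious obstacle in this argument; the only point requiring care is the initial reformulation in terms of codes, where one needs that every separable realization of $p$ arises from some $(\xi, u)$ with $\xi \in \Mod(T)$, which is exactly downward L\"owenheim--Skolem together with the fact that enumerations of separable elementary substructures are dense in $\Mod(T)$.
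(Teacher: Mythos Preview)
Your proof is correct and follows essentially the same approach as the paper: both express rigidity as a universal quantification over $\Mod(T)$ of a Borel predicate and invoke the hypothesis that $\cong_T$ is Borel. The only cosmetic difference is that the paper fixes $u = (0, \ldots, k-1)$ once and for all (since any separable realization of $p$ can be enumerated with $\bar a$ in those positions), whereas you additionally quantify over $u, v \in \N^k$; this is harmless since the extra quantifiers are countable.
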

\begin{proof}
  Let $u = (0, \ldots, k-1)$ and note that for $p \in \tS_k(T)$,
  \begin{equation*}
    p \text{ is rigid} \iff \forall \xi, \eta \in \Mod(T) \quad \tp^{M_\xi} u = \tp^{M_\eta} u = p \implies M_\xi \cong M_\eta
  \end{equation*}
  and isomorphism being Borel, the latter condition is clearly $\bPi^1_1$.
\end{proof}


\section{The isomorphism equivalence relation: locally compact structures}
\label{sec:locally-compact}

The following is our main theorem about the complexity of isomorphism of locally compact structures and this section is devoted to its proof.
\begin{theorem}
  \label{th:loc-cpct-HK}
  Let $T$ be a theory such that all of its separable models are locally compact. Then the following are equivalent:
	\begin{enumerate}
		\item \label{i:th:HK:1} $\cong_T$ is potentially $\bSigma^0_2$;
		
		\item \label{i:th:HK:2} There exists a fragment $\cF$ such that $T$ is an $\cF$-theory and for every $\xi \in \Mod(T)$, there is $k \in \N$ such that the set
		\begin{equation*}
		\set{\bar a \in M_\xi^k : \tp_\cF \bar a \text{ is rigid}}
		\end{equation*}
		has non-empty interior in $M_\xi^k$;
		
		\item \label{i:th:HK:3} $\cong_T$ is essentially countable.
	\end{enumerate} 
\end{theorem}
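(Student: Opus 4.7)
The plan is to prove the cyclic implications $(\ref{i:th:HK:3}) \Rightarrow (\ref{i:th:HK:1}) \Rightarrow (\ref{i:th:HK:2}) \Rightarrow (\ref{i:th:HK:3})$. The first is formal: any countable Borel equivalence relation $F$ is $F_\sigma$ in a suitable Polish topology (by Feldman--Moore, $F$ is a countable union of Borel graphs, which can be made closed in a Polish refinement of the topology), and potential $\bSigma^0_2$-ness is preserved under Borel reducibility.

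For $(\ref{i:th:HK:1}) \Rightarrow (\ref{i:th:HK:2})$, I apply Lemma~\ref{l:Sigma02} to obtain a fragment $\cF$ for which every $\cong_T$-class is $\bSigma^0_2(t_\cF)$, and then apply Proposition~\ref{p:Sigma02-implies-rigid} to each $\xi \in \Mod(T)$ separately. This directly furnishes the integer $k = k(\xi)$ together with a non-empty open set of rigid $k$-tuples in $M_\xi^k$, as required.

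The bulk of the work lies in $(\ref{i:th:HK:2}) \Rightarrow (\ref{i:th:HK:3})$, where local compactness enters decisively. Since $k$ may depend on $\xi$, I first partition $\Mod(T)$ into countably many Borel, $\cong_T$-invariant pieces on which the minimal witnessing $k$ is constant, reducing to the case that $k$ is fixed. On the Borel set $\{(\xi, u) \in \Mod(T) \times \N^k : \tp_\cF^\xi u \text{ is rigid}\}$, whose $\xi$-sections are non-empty by the interior hypothesis and the density of the enumeration, Luzin--Novikov uniformization yields a Borel selector $\xi \mapsto u(\xi)$. The induced Borel map $f \colon \xi \mapsto \tp_\cF^\xi u(\xi)$ lands in the set $R_k \subseteq \tS_k(T)$ of rigid types (which is $\bPi^1_1$ by Lemma~\ref{l:rigid-pi11}) and, by rigidity, satisfies $\xi \cong \eta \iff M_{f(\xi)} \cong M_{f(\eta)}$. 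Hence $\cong_T$ Borel-reduces to the equivalence relation $E$ on $R_k$ given by ``realizing models are isomorphic.''

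The main obstacle is to show that $E$ is essentially countable, and this is where local compactness of the models is used. The key geometric input is that $\bar a \mapsto \tp_\cF \bar a$ is $1$-Lipschitz from $(M^k, d)$ to $(\tS_k(T), d_\tp)$, so for each separable locally compact $M$ the set of rigid types realized in $M^k$ is $\sigma$-compact in the $d_\tp$-metric. The plan is to Borel-assign to each $\xi$ a countable subset $C_\xi \subseteq R_k$ of rigid types realized in $M_\xi$, constructed by exhausting the open set of rigid tuples in $M_\xi^k$ by a Borel sequence of compact neighborhoods and extracting countable $d_\tp$-dense subsets of the corresponding compact sets of types. Rigidity immediately gives $C_\xi \cap C_\eta \neq \emptyset \Rightarrow \xi \cong \eta$, while a careful choice of the exhaustion should ensure that isomorphic $\xi$ and $\eta$ produce selections related in a countable Borel way; invoking the standard characterization of essential countability via Borel assignment of countable complete invariants then concludes the argument. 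The hardest step is carrying out this selection in a genuinely Borel, isomorphism-compatible manner using only local compactness, and the structure of the argument should parallel (and generalize) the proof of Kechris's theorem for locally compact Polish group actions, which is recovered as a corollary of the present theorem.
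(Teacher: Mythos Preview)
Your implications $(\ref{i:th:HK:3}) \Rightarrow (\ref{i:th:HK:1})$ and $(\ref{i:th:HK:1}) \Rightarrow (\ref{i:th:HK:2})$ are correct and match the paper. For $(\ref{i:th:HK:2}) \Rightarrow (\ref{i:th:HK:3})$ there are problems.

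Two preliminary issues. The set $\{(\xi,u) : \tp_\cF^\xi u \text{ is rigid}\}$ is not known to be Borel: Lemma~\ref{l:rigid-pi11} only gives $\bPi^1_1$, and that lemma already presupposes that $\cong_T$ is Borel, a nontrivial fact you never establish (the paper proves it separately as Lemma~\ref{l:ii-implies-Borel}, using local compactness). So Luzin--Novikov is the wrong tool; one needs $\bPi^1_1$ number uniformization over $\N^{<\N}$, which also makes your preliminary partition by minimal $k$ unnecessary (and its Borelness is unclear anyway).

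The real gap is the last paragraph. Your map $f(\xi)=\tp^\xi u(\xi)$ reduces $\cong_T$ to $E$ on rigid types, but $E$-classes are typically uncountable: a single locally compact model may realize continuum many rigid types. You acknowledge this, but your proposed fix---assigning countable $C_\xi$ via dense subsets of compact images---does not yield an invariant. Different enumerations $\xi \cong \eta$ of the same model will give disjoint $C_\xi, C_\eta$, and ``related in a countable Borel way'' is precisely what has to be constructed, not assumed. The paper supplies the missing idea: for a rigid $p$ realized in $M_\xi$, consider the equivalence class $[p]_{R_{U,\eps}}$ of types reachable from $p$ by $\dtp$-chains with steps $<\eps$ inside a fixed basic $\tau$-open $U$. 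Local compactness, via Lemma~\ref{l:loc-cpct-type-space}, lets one choose $U,\eps$ so that this class stays inside a compact $\dtp$-ball; hence its $\tau$-closure is a compact subset of $\Theta_k(M_\xi)$ and is an element of the standard Borel space $K(\tS_k(T))$ (Lemma~\ref{l:compact-class}). The map $\Psi(\xi)=\cl[\tau]{[p]_{R_{U,\eps}}}$ then satisfies Kechris's criterion: only countably many choices of $U,\eps$ and countably many $R_{U,\eps}$-classes (each is $\dtp$-open in a separable space), so each $\cong_T$-class has countable image; and $\Psi(\xi)=\Psi(\eta)$ forces a common realized rigid type, hence $M_\xi\cong M_\eta$. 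Your outline has the right target but is missing this invariant, and nothing in your sketch of $C_\xi$ can substitute for it.
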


If $M$ is a structure and $\cF$ is a fragment, let
\begin{equation*}
  \Theta_n^\cF (M) = \set{p \in \tS_n(\Th_\cF(M)) : M \text{ realizes } p}.
\end{equation*}
If $\cF$ is understood, we will often omit it from the notation.

If $(Z, d)$ is a metric space, $z_0 \in Z$ and $r > 0$, we denote by
\begin{equation*}
  B_r(z_0) = \set{z \in Z : d(z, z_0) < r} \quad \And \quad B'_r(z_0) = \set{z \in Z : d(z, z_0) \leq r}
\end{equation*}
the open and closed balls with center $z_0$ and radius $r$, respectively.
If $Z$ is in addition locally compact, denote
\begin{equation*}
\rho(z_0) = \sup \set{r \in \R : \cl{B_r(z_0)} \text{ is compact}} = \sup \set{r \in \R : B_r'(z_0) \text{ is compact}}.
\end{equation*}
If $\xi \in \Mod(T)$ for some theory $T$, we will often write $\rho_\xi$ instead of $\rho_{M_\xi^k}$ for some Cartesian power $k$ that is understood from the context.

The next lemma collects some basic facts about type spaces of theories with locally compact models.
\begin{lemma}
  \label{l:loc-cpct-type-space}
  Let $\cF$ be a fragment, let $T$ be an $\cF$-theory, and let $M$ be a separable, locally compact model of $T$. Let $\Phi \colon (M^n, d) \to (\tS_n(T), \partial)$ be defined by $\Phi(\bar a) = \tp_\cF \bar a$. Then the following hold:
	\begin{enumerate}
    \item \label{i:p:loc-cpct-type-space:1} $\Phi$ is a contraction for the metrics $d$ on $M^n$ and $\dtp$ on $\tS_n(T)$.
    \item \label{i:p:loc-cpct-type-space:2} If $\bar a \in M^n$ and $r < \rho(\bar a)$, then $\Phi(B_r'(\bar a)) = B_r'(\Phi(a))$. In particular, $B_r'(\tp \bar a) \sub \Theta_n(M)$ and $B_r'(\tp \bar a)$ is $\dtp$-compact.
    \item \label{i:p:loc-cpct-type-space:3} If $B_r(\bar a)$ is an open ball with $r \leq \rho(\bar a)$, then $\Phi(B_r(\bar a)) = B_r(\Phi(\bar a))$. In particular, $\Phi$ is an open map $M^n \to (\tS_n(T), \dtp)$.
    \item \label{i:p:loc-cpct-type-space:4} The set $\Theta_n(M)$ is open in $(\tS_n(T), \dtp)$ and the space $(\Theta_n(M), \dtp)$ is locally compact and separable. 
	\end{enumerate}
\end{lemma}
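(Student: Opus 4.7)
My plan is to handle the four parts in order, with \ref{i:p:loc-cpct-type-space:2} being the technical heart and the others following as quick corollaries. Part \ref{i:p:loc-cpct-type-space:1} is immediate from the inequality $\dtp(\tp \bar a, \tp \bar b) \leq d(\bar a, \bar b)$ noted right after Proposition~\ref{p:equiv-dfn-topometric}.

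For part \ref{i:p:loc-cpct-type-space:2}, the inclusion $\Phi(B_r'(\bar a)) \subseteq B_r'(\tp \bar a)$ follows from \ref{i:p:loc-cpct-type-space:1}. For the reverse inclusion, fix $p \in B_r'(\tp \bar a)$ and, for any $\eps \in (0, \rho(\bar a) - r)$, set $K_\eps \coloneqq B_{r+\eps}'(\bar a)$, which is compact in $M^n$. I would first prove that $p \in \Phi(K_\eps)$ for every such $\eps$. The contraction $\Phi$ sends $K_\eps$ to a $\dtp$-compact, and hence $\tau$-closed, subset of $\tS_n(T)$, so it suffices to show $p$ lies in the $\tau$-closure of $\Phi(K_\eps)$. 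Given a basic $\tau$-neighborhood of $p$ determined by formulas $\phi_1, \ldots, \phi_m$ and tolerance $c > 0$, I would introduce the auxiliary $\cF$-formula
\begin{equation*}
  \theta(\bar x) = \inf_{\bar y} \Bigl( \bigl( d(\bar x, \bar y) \dotminus (r + \eps/2) \bigr) \vee \max_i |\phi_i(\bar y) - \phi_i(p)| \Bigr),
\end{equation*}
which bundles the joint approximation into a single condition. Applying \eqref{eq:dtp2} with the $\tau$-neighborhood of $\tp \bar a$ chosen in terms of $\theta$ itself produces a model $M'$ and $\bar a' \in (M')^n$ on which $\theta$ is arbitrarily small; the $\tau$-continuity of $\theta$ then transfers smallness back to $M$, forcing $\theta^M(\bar a) = 0$, and a near-witness of this infimum supplies $\bar b \in K_\eps$ with $\tp \bar b$ in the prescribed neighborhood of $p$. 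This gives $p \in \Phi(K_\eps)$. A compactness argument, extracting a convergent subsequence from realizations $\bar b_n \in K_{\eps_n}$ for $\eps_n \to 0$ inside some fixed $K_{\eps_0}$, then produces the desired realization in $B_r'(\bar a)$.

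The main obstacle is precisely this simultaneous approximation step, since \eqref{eq:dtp1} a priori only controls one formula at a time. Bundling the $\phi_i$ into the single $\cF$-formula $\theta$ and exploiting the freedom in \eqref{eq:dtp2} to choose the neighborhood of $\tp \bar a$ in terms of $\theta$ is what resolves this.

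Parts \ref{i:p:loc-cpct-type-space:3} and \ref{i:p:loc-cpct-type-space:4} are then short consequences. For \ref{i:p:loc-cpct-type-space:3}, given $p \in B_r(\tp \bar a)$ with $r \leq \rho(\bar a)$, I would pick $r_0 \in (\dtp(p, \tp \bar a), r)$ and apply \ref{i:p:loc-cpct-type-space:2} to obtain a realization in $B_{r_0}'(\bar a) \subseteq B_r(\bar a)$; openness of $\Phi$ then follows by choosing sufficiently small balls inside any given open set. For \ref{i:p:loc-cpct-type-space:4}, openness of $\Theta_n(M)$ in $\dtp$ and local compactness are immediate from \ref{i:p:loc-cpct-type-space:2}, since every realized type has a $\dtp$-compact closed ball contained in $\Theta_n(M)$, and separability follows from the continuity of $\Phi$ on the separable space $M^n$.
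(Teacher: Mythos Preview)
Your proposal is correct and follows essentially the same route as the paper. For part \ref{i:p:loc-cpct-type-space:2} the paper is more direct: it applies \eqref{eq:dtp1} (with $q = \tp \bar a$) to an enumeration of $\cF$, obtaining a sequence $\bar b_i \in M^n$ with $\limsup_i d(\bar b_i,\bar a) \le r$ and $\phi(\bar b_i)\to\phi(p)$ for every $\phi$, and then extracts a limit by compactness of a slightly larger closed ball---your detour through \eqref{eq:dtp2} to establish $\theta^M(\bar a)=0$ works, but is unnecessary, since \eqref{eq:dtp1} with the single formula $\max_i |\phi_i - \phi_i(p)|$ already yields this directly. Parts \ref{i:p:loc-cpct-type-space:3} and \ref{i:p:loc-cpct-type-space:4} are handled the same way in both (the paper writes $B_r(\bar a)=\bigcup_{r'<r} B'_{r'}(\bar a)$ and invokes the general fact that open continuous images of locally compact spaces are locally compact).
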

\begin{proof}
  \ref{i:p:loc-cpct-type-space:1} This is clear.
  
  \ref{i:p:loc-cpct-type-space:2} Let $p \in \tS_n(T)$ be such that $\partial(p, \tp \bar a) \leq r$. Then by the definition \eqref{eq:dtp1} of $\dtp$, there is a sequence $(\bar b_i)_{i \in \N}$ of elements of $B'_r(\bar a)$ such that for every $\cF$-formula $\phi$, $\phi^M(\bar b_i) \to \phi(p)$ and $\limsup_{i \to \infty} d(\bar b_i, \bar a) \leq r$. Let $\bar b$ be a limit point of the $\bar b_i$. Then $\bar b \models p$ and $d(\bar b, \bar a) \leq r$, as required.

  \ref{i:p:loc-cpct-type-space:3} Let $B_r(\bar a)$ be an open ball around $\bar a$ with $r < \rho(\bar a)$. Using \ref{i:p:loc-cpct-type-space:2}, we have that
  \begin{equation*}
    \Phi(B_r(\bar a)) = \Phi(\bigcup_{r' < r} B'_{r'}(\bar a)) = \bigcup_{r' < r} \Phi(B'_{r'}(\bar a)) = \bigcup_{r' < r} B'_{r'}(\Phi(\bar a)) = B_r(\Phi(\bar a)).
  \end{equation*}
  We conclude by observing that, as $M$ is locally compact, the sets $\set{B_r(\bar a) : \bar a \in M^n, r < \rho(\bar a)}$ form a basis for the topology of $M^n$.
	
	\ref{i:p:loc-cpct-type-space:4} This follows from \ref{i:p:loc-cpct-type-space:3} and the fact that the open, continuous image of a locally compact space is locally compact.
\end{proof}

For a fixed $\tau$-open set $U \sub \tS_n(T)$ and $\eps > 0$, define the following equivalence relation $R_{U, \eps}(M)$ on $U \cap \Theta_n(M)$:
\begin{equation*}
p \eqrel{R_{U, \eps}} q \iff \exists p_0, \ldots, p_k \in U \ p_0 = p \And p_k = q \And \forall i < k \ \partial(p_i, p_{i+1}) < \eps.
\end{equation*}
Note that each $R_{U, \eps}$-class is $\partial$-open, so by Lemma~\ref{l:loc-cpct-type-space}, there are only countably many of them.

\begin{lemma}
	\label{l:compact-class}
	Let $M$ be locally compact. Then for every $p \in \Theta_n(M)$, there exist a basic $\tau$-open $U$ and $\eps > 0$ such that $\cl[\partial]{[p]_{R_{U, \eps}}}$ is $\dtp$-compact and contained in $\Theta_n(M)$. In particular, $\cl[\tau]{[p]_{R_{U, \eps}}}$ is $\tau$-compact and $\cl[\tau]{[p]_{R_{U, \eps}}} \sub \Theta_n(M)$.
\end{lemma}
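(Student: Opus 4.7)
The plan is to use local compactness of $M^n$ to obtain a $\dtp$-compact closed ball $L$ around $p$ inside $\Theta_n(M)$, and then choose $U$ and $\eps$ so that every chain witnessing $R_{U,\eps}$-equivalence with $p$ is trapped inside a smaller $\dtp$-ball $B_{r/2}(p)$. Concretely, I will fix a realization $\bar a \in M^n$ of $p$; by local compactness $\rho(\bar a) > 0$, so I may fix $r$ with $0 < r < \rho(\bar a)$. Then Lemma~\ref{l:loc-cpct-type-space}\ref{i:p:loc-cpct-type-space:2} yields $L := B'_r(p) \subseteq \Theta_n(M)$ with $L$ being $\dtp$-compact; since the $\dtp$-topology refines $\tau$, $L$ is also $\tau$-compact, and $L$ is $\tau$-closed by the $\tau$-lower semicontinuity of $\dtp$.

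The central technical observation is that on $L$ the topologies $\tau$ and $\dtp$ actually coincide: both are Hausdorff with $\tau$ coarser than the $\dtp$-topology, and both make $L$ compact, so the identity map $(L, \dtp) \to (L, \tau)$ is a continuous bijection of compact Hausdorff spaces, hence a homeomorphism. As a consequence the $\dtp$-open ball $B_{r/2}(p)$ is relatively $\tau$-open in $L$, and $L \setminus B_{r/2}(p)$ is a $\tau$-closed subset of $\tS_n(T)$ not containing $p$. Using normality of the compact Hausdorff space $(\tS_n(T), \tau)$, I can find a basic $\tau$-open $U \ni p$ disjoint from $L \setminus B_{r/2}(p)$, i.e., with $U \cap L \subseteq B_{r/2}(p)$.

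With such $U$ in hand, fix $\eps$ with $0 < \eps < r/2$. I will show by induction on $k$ that for every chain $p = p_0, p_1, \dots, p_k$ in $U \cap \Theta_n(M)$ with $\dtp(p_i, p_{i+1}) < \eps$ one has $p_i \in B_{r/2}(p)$ for all $i$. The inductive step is just the triangle inequality: if $p_i \in U \cap L \subseteq B_{r/2}(p)$, then $\dtp(p_{i+1}, p) < r/2 + \eps < r$, hence $p_{i+1} \in L$, and combined with $p_{i+1} \in U$ this gives $p_{i+1} \in U \cap L \subseteq B_{r/2}(p)$. Therefore $[p]_{R_{U,\eps}} \subseteq B_{r/2}(p)$, so $\cl[\dtp]{[p]_{R_{U,\eps}}} \subseteq B'_{r/2}(p) \subseteq L$ is $\dtp$-compact and contained in $\Theta_n(M)$. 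The ``in particular'' clause follows immediately: since $L$ is $\tau$-closed, $\cl[\tau]{[p]_{R_{U,\eps}}} \subseteq L$ is a $\tau$-closed subset of the $\tau$-compact set $L$, so it is itself $\tau$-compact and still inside $\Theta_n(M)$.

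The main obstacle is the choice of $U$, since in general $\tau$-neighborhoods of $p$ can contain realizable types that are $\dtp$-arbitrarily far from $p$; simply taking a ``$\tau$-small'' neighborhood will not do. The resolution is that, although the $\dtp$-topology is strictly finer than $\tau$ on $\tS_n(T)$, the two topologies agree on the $\dtp$-compact ball $L$, and this lets me realize the natural $\dtp$-open neighborhood of $p$ inside $L$ as the trace of a genuine $\tau$-open set and then separate $p$ from the complementary closed ``outer annulus'' $L \setminus B_{r/2}(p)$ by a basic $\tau$-open set.
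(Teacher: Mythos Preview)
Your argument is correct and essentially identical to the paper's: both use that $\tau$ and $\dtp$ coincide on a $\dtp$-compact closed ball around $p$ to produce a basic $\tau$-open $U$ whose trace on that ball lies in a half-radius ball, and then run the same inductive triangle-inequality step to trap the $R_{U,\eps}$-class (the paper just takes $r = 2\eps$ from the outset). One small slip: $(\tS_n(T),\tau)$ is not compact---it is $\widehat\tS_n(T)$ that is---but you do not actually need normality here, since $L \setminus B_{r/2}(p)$ is $\tau$-compact (hence $\tau$-closed) and avoids $p$, so its complement is already a $\tau$-open neighbourhood of $p$ from which a basic one can be extracted.
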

\begin{proof}
  Use Lemma~\ref{l:loc-cpct-type-space} to find $\eps > 0$ such that $B'_{2\eps}(p)$ is $\dtp$-compact and contained in $\Theta_n(M)$. Then the $\tau$-topology and the $\partial$-topology coincide on $B_{2\eps}(p)$ and therefore there exists a basic $\tau$-open $U \sub \tS_n(T)$ such that $p \in U$ and $U \cap B_{2\eps}(p) \sub B_\eps(p)$. This implies that $[p]_{R_{U, \eps}} \sub B_\eps(p)$. As $B'_\eps(p)$ is $\tau$-closed, we obtain that $\cl[\tau]{[p]_{R_{U, \eps}}} \sub B'_\eps(p) \sub \Theta_n(M)$. Furthermore, $\cl[\tau]{[p]_{R_{U, \eps}}}$, being a $\dtp$-closed subset of the $\dtp$-compact set $B'_\eps(p)$, is $\dtp$-compact and therefore also $\tau$-compact.
\end{proof}

\begin{lemma}
  \label{l:p-to-pRUe-Borel}
  Let $T$ be a theory whose models are locally compact and let $k \in \N$. Then the following maps are Borel:
  \begin{enumerate}
  \item \label{i:p:Borel:1} $\Mod(T) \times \N^k \to \R$, $(\xi, u) \mapsto \rho_\xi(u)$;
  \item \label{i:p:Borel:2} $\Mod(T) \times \N^k \times \R^+ \to K(\tS_k(T))$, $(\xi, u, r) \mapsto B'_r(\tp^\xi u)$ if $r < \rho_\xi(u)$ and $\emptyset$, otherwise. Here $K(\tS_k(T))$ denotes the collection of $\tau$-compact subsets of $\tS_k(T)$ equipped with the Vietoris topology.
  \end{enumerate}
\end{lemma}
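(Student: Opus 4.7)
The plan is to reduce both statements to Borel conditions on the countable dense set $\{a_j : j \in \N\}$ encoded by $\xi$, exploiting that each $M_\xi^k$ is locally compact and complete.

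For \ref{i:p:Borel:1}, I would first rewrite
\[
\rho_\xi(u) > s \iff \exists r \in \Q \cap (s, \infty), \; B_r(u) \text{ is totally bounded in } M_\xi^k.
\]
The forward implication picks $r$ with $s < r < \rho_\xi(u)$ and uses $B_r(u) \subseteq B_r'(u)$ compact; conversely, completeness of $M_\xi^k$ turns total boundedness of $B_r(u)$ into compactness of $\cl{B_r(u)}$, and $B_{r'}'(u) \subseteq \cl{B_r(u)}$ for any $r' < r$ gives $\rho_\xi(u) \geq r' > s$ once $r' > s$. Since $B_r(u)$ is open and $\N^k$ is dense in $M_\xi^k$, the subset $\N^k \cap B_r(u)$ is dense in $B_r(u)$, so after an easy perturbation argument total boundedness is equivalent to
\[
\forall n \; \exists v_1, \ldots, v_m \in \N^k \; \forall w \in \N^k \; \big( d(w, u) < r \implies \exists i \leq m, \; d(w, v_i) < 1/n \big).
\]
Each $d^\xi(w, u), d^\xi(w, v_i)$ is a continuous function of $\xi$, so this exhibits $\{\xi : \rho_\xi(u) > s\}$ as Borel, making $\rho$ Borel measurable.

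For \ref{i:p:Borel:2}, part \ref{i:p:Borel:1} makes $D = \{(\xi, u, r) : r < \rho_\xi(u)\}$ Borel, and off $D$ the map is constantly $\emptyset$, so it suffices to show that on $D$ the preimages of the two subbasic Vietoris opens $\{K : K \subseteq U\}$ and $\{K : K \cap U \neq \emptyset\}$ are Borel, for $U$ ranging over the countable subbase $\oset{\phi < s}$ of $\tau$ with $\phi \in \cF$ and $s \in \Q$. By Lemma~\ref{l:loc-cpct-type-space}\ref{i:p:loc-cpct-type-space:2}, on $D$ we have $B_r'(\tp^\xi u) = \Phi(B_r'(u))$, so the two Vietoris conditions translate respectively to $\sup_{B_r'(u)} \phi^\xi < s$ and $\inf_{B_r'(u)} \phi^\xi < s$.

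The main obstacle is that $\N^k \cap B_r'(u)$ need not be dense in the closed ball $B_r'(u)$, so one cannot directly replace $\bar a \in B_r'(u)$ by $\bar a \in \N^k$. The remedy is to approximate $B_r'(u)$ from outside by open balls $B_{r'}(u)$ with $r' > r$, where the dense set does intersect densely, and to pass to limits using that $B_{r+\delta}'(u)$ is compact for some $\delta > 0$. Combined with uniform continuity of $\phi$, this yields
\[
\sup_{B_r'(u)} \phi^\xi < s \iff \exists r' \in \Q \cap (r,\infty), \; \exists \eta \in \Q^+, \; \forall \bar a \in \N^k \; \big( d(\bar a, u) < r' \implies \phi^\xi(\bar a) < s - \eta \big)
\]
and
\[
\inf_{B_r'(u)} \phi^\xi < s \iff \exists \eta \in \Q^+, \; \forall n, \; \exists \bar a \in \N^k \; \big( d(\bar a, u) < r + 1/n \And \phi^\xi(\bar a) < s - \eta \big).
\]
In each, the backward direction is where local compactness enters: one extracts a convergent subsequence inside the slightly larger compact ball $B_{r+\delta}'(u)$, whose limit $\bar a$ satisfies $d(\bar a, u) \leq r$ and $\phi^\xi(\bar a) \leq s - \eta$, hence lies in $B_r'(u)$ and witnesses the left-hand side. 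The right-hand sides are visibly Borel in $\xi$, completing the plan.
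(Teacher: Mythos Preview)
Your proof is correct and follows essentially the same approach as the paper. For \ref{i:p:Borel:1} the arguments are virtually identical. For \ref{i:p:Borel:2} the only differences are cosmetic: you translate via Lemma~\ref{l:loc-cpct-type-space}\ref{i:p:loc-cpct-type-space:2} to work with $\sup$ and $\inf$ of $\phi^\xi$ over the closed ball in $M_\xi^k$, while the paper phrases the same computation in the type space using Lemma~\ref{l:loc-cpct-type-space}\ref{i:p:loc-cpct-type-space:3}; and you verify both Vietoris subbasic forms $\{K : K \subseteq U\}$ and $\{K : K \cap U \neq \emptyset\}$, whereas the paper checks only the latter, which already suffices to generate the Borel structure on $K(\tS_k(T))$. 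Your ``$\inf$'' equivalence is literally the paper's displayed equivalence with $s' = s - \eta$ and $r' = r + 1/n$, and the compactness extraction you describe for the backward direction is exactly the argument the paper gives.
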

\begin{proof}
  \ref{i:p:Borel:1} We consider for simplicity of notation the case $k = 1$. For $r \in \R^+$, we have:
  \begin{equation*}
    \begin{split}
      \rho_{\xi}(u) > r \iff& \exists r' > r \  B_{r'}^{M_\xi}(u) \text{ is totally bounded} \\
      \iff& \exists r' > r \ \forall \eps > 0 \ \exists v_0, \ldots, v_{m-1} \in \N \ \forall w \in \N  \\
       &\qquad d^\xi(u, w) < r' \implies \exists i < m \ d^\xi(v_i, w) < \eps
    \end{split}
  \end{equation*}
  and this is clearly Borel. (The quantifiers on $r'$ and $\eps$ can be taken over the rationals.)
  
  \ref{i:p:Borel:2} We need to check that for any basic $\tau$-open $U \sub \tS_k(T)$, the set
  \begin{equation*}
	W = \set{(\xi, u, r) \in \Mod(T) \times \N^k \times \R^+ : r < \rho_\xi(u) \And B_r'(\tp^\xi u) \cap U \neq \emptyset}
  \end{equation*}
  is Borel. Let $U = \oset{\phi < s}$ for some formula $\phi$. We have that:
  \begin{equation*}
    \begin{split}
      (\xi, u, r) \in W &\iff r < \rho_\xi(u) \And \exists s' < s \ \forall r' > r \ B_{r'}(\tp^\xi u) \cap \oset{\phi < s'} \neq \emptyset \\
      &\iff r < \rho_\xi(u) \And \exists s' < s \ \forall r' > r \ \exists v \in \N^k \\
      & \qquad \qquad d^\xi(u, v) < r' \And \phi^\xi(v) < s',
    \end{split}
  \end{equation*}
  which is clearly Borel. The left to right direction of the first equivalence is clear. To go from right to left, suppose that the right-hand side holds. For $n \in \N$, let $p_n \in B_{r+1/n}(\tp^\xi u)$ be such that $\phi(p_n) < s'$. It follows from Lemma~\ref{l:loc-cpct-type-space} \ref{i:p:loc-cpct-type-space:2} that $\rho(\tp^\xi u) \geq \rho_\xi(u) > r$, so we may assume that the sequence $p_n$ $\dtp$-converges to some $p$. Then $\phi(p) \leq s' < s$ and $\dtp(p, \tp^\xi u) \leq r$, so $p \in B_r'(\tp^\xi u) \cap U$. The second equivalence follows from Lemma~\ref{l:loc-cpct-type-space} \ref{i:p:loc-cpct-type-space:3}.
\end{proof}

\begin{lemma}
  \label{l:ii-implies-Borel}
  Suppose that $T$ is a theory satisfying item \ref{i:p:loc-cpct-type-space:2} of the statement of Theorem~\ref{th:loc-cpct-HK}. Then $\cong_T$ is Borel.
\end{lemma}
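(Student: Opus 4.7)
Let $\cF$ be the fragment provided by hypothesis (ii). My aim is to write
\[
  \xi \cong_T \eta \iff \exists k \in \N,\ \exists u \in \N^k : \tp_\cF^\xi u \text{ is rigid and is realized in } M_\eta.
\]
The reverse direction is immediate from the definition of rigidity: if $\tp_\cF^\xi u$ is rigid and realized in $M_\eta$ by some $\bar b$, then $(M_\xi, u)$ and $(M_\eta, \bar b)$ are two separable realizations of the same rigid type, so $M_\xi \cong M_\eta$. For the forward direction, use (ii) to get $k$ and an open set $B \subseteq M_\xi^k$ of rigid tuples; by density of $\N^k$ in $M_\xi^k$, pick $u \in \N^k \cap B$; then $\tp_\cF^\xi u$ is rigid, and any isomorphism $M_\xi \to M_\eta$ carries $u$ to a realization of $\tp_\cF^\xi u$ in $M_\eta$.

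Next, I would show that the realization condition is Borel in $(\xi, \eta, u)$. Using local compactness and Lemma~\ref{l:loc-cpct-type-space}, the set $\Theta_k(M_\eta)$ of realized $k$-types decomposes as the countable union $\bigcup_{v \in \N^k,\, r \in \Q^+,\, r < \rho_\eta(v)} B'_r(\tp_\cF^\eta v)$, so
\[
  \tp_\cF^\xi u \text{ realized in } M_\eta \iff \exists v \in \N^k,\ \exists r \in \Q^+ : r < \rho_\eta(v) \text{ and } \dtp(\tp_\cF^\xi u, \tp_\cF^\eta v) \leq r.
\]
This is Borel: the predicate $r < \rho_\eta(v)$ is Borel by Lemma~\ref{l:p-to-pRUe-Borel}\ref{i:p:Borel:1}, and $\dtp$ is Borel on pairs of types via the formula $\dtp = \sup_{\phi \in \cF_1} |\phi(p) - \phi(q)|$ from Proposition~\ref{p:dtp-alt-def}\ref{i:p:dtp:2}, with $\cF_1$ countable.

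The heart of the argument is to show that $\set{(\xi, u) : \tp_\cF^\xi u \text{ is rigid}}$ is Borel. I would leverage (ii) --- that rigid tuples fill open neighborhoods --- together with the Borel-accessible compact balls $B'_r(\tp_\cF^\xi u) \subseteq \Theta_{|u|}(M_\xi)$ provided by Lemma~\ref{l:p-to-pRUe-Borel}\ref{i:p:Borel:2} to reformulate rigidity by Borel conditions on compact neighborhoods: one wants to express that every pair of separable realizations of the type embeds into a compact piece of the type space witnessing their isomorphism, replacing the abstract quantifier over models by a quantifier over tuples (and over Borel-accessible compact balls) in $\Mod(T)$.

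The main obstacle is precisely this last step. The tautological definition of rigidity involves a universal quantifier over separable realizations, giving only a $\bPi^1_1$ description; Lemma~\ref{l:rigid-pi11} does not help here since it already assumes Borelness of $\cong_T$, which is what we want to prove. Both hypotheses of the lemma --- local compactness and the non-empty interior condition in (ii) --- must be used essentially to break this circularity, by turning the abstract existence of rigid tuples in every model (from (ii)) together with the compactness of the relevant $\dtp$-balls (from local compactness and Lemma~\ref{l:p-to-pRUe-Borel}) into a Borel witness scheme for rigidity.
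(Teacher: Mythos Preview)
Your equivalence
\[
  \xi \cong_T \eta \iff \exists k,\ \exists u \in \N^k : \tp_\cF^\xi u \text{ is rigid and realized in } M_\eta
\]
is correct, and your argument that ``realized in $M_\eta$'' is a Borel condition is fine. But the gap you yourself flag is fatal to this route: you need the predicate ``$\tp_\cF^\xi u$ is rigid'' to be Borel, and you have no way to show this without already knowing that $\cong_T$ is Borel. Your last paragraph gestures at using compact $\dtp$-balls as ``Borel witnesses'' for rigidity, but rigidity is a statement about \emph{all} separable realizations of the type, and nothing about a single compact ball in $\tS_k(T)$ certifies that. The circularity is genuine and cannot be broken along these lines.

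The paper's proof sidesteps this entirely by never writing ``rigid'' into the Borel formula. Instead it proves
\[
  M_\xi \cong M_\eta \iff \forall k\ \forall u \in \N^k\ \forall r < \rho_\xi(u)\ \exists v \in \N^k\ \ \tp^{\eta} v \in B_r'(\tp^{\xi} u),
\]
whose right-hand side is Borel directly from Lemma~\ref{l:p-to-pRUe-Borel}. The forward implication is trivial. For the backward implication, hypothesis (ii) gives some $k$, $u$, and $r < \rho_\xi(u)$ such that every type in $B_r'(\tp^\xi u)$ is rigid (and, by Lemma~\ref{l:loc-cpct-type-space}, realized in $M_\xi$); the right-hand side then forces $M_\eta$ to realize one of these rigid types, whence $M_\xi \cong M_\eta$. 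The point is that rigidity is used only inside the proof of the biconditional, not in the displayed Borel condition---so there is nothing to make Borel except the ball-membership predicate, which you already know how to handle.
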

\begin{proof}
  Fix a fragment $\cF$ satisfying the condition in item \ref{i:th:HK:2} of Theorem~\ref{th:loc-cpct-HK}. We will show that
  \begin{equation}
    \label{eq:isom-Borel}
    M_\xi \cong M_\eta \iff \forall k \ \forall u \in \N^k \ \forall r < \rho_\xi(u) \ \exists v \in \N^k  \quad \tp^{\eta} v \in B_r'(\tp^{\xi} u),
  \end{equation}
  where the types are taken with respect to $\cF$.
  Suppose first that $M_\xi \cong M_\eta$ and let $f \colon M_\xi \to M_\eta$ be an isomorphism. Let $k$, $u$ and $r$ be given. Then any $v \in f(B_r(u))$ works because $\tp^{\eta} v = \tp^{\xi} f^{-1}(v)$.

  Now suppose that the right-hand side of \eqref{eq:isom-Borel} holds. Let $k$ be such that
  \begin{equation*}
    \set{\bar a \in M_\xi^k : \tp \bar a \text{ is rigid}}
  \end{equation*}
  has non-empty interior in $M_\xi^k$. Let $u \in \N^k$ and $r < \rho_\xi(u)$ be such that for all $\bar a \in B_r'(u)$, $\tp^{\xi} \bar a$ is rigid. It follows from Lemma~\ref{l:loc-cpct-type-space} that every $p \in B_r'(\tp^{\xi} u)$ is realized in $M_\xi$ and is rigid. The hypothesis implies that some $p \in B_r'(\tp^{\xi} u)$ is realized in $M_\eta$. Thus $M_\xi$ and $M_\eta$ realize a common rigid type, so they must be isomorphic. This concludes the proof of \eqref{eq:isom-Borel}.

  Finally, it follows from Lemma~\ref{l:p-to-pRUe-Borel} that the condition in the right-hand side of \eqref{eq:isom-Borel} is Borel.
\end{proof}

\begin{proof}[Proof of Theorem~\ref{th:loc-cpct-HK}]
	\begin{cycprf}
		\item[\impnext] This follows from Lemma~\ref{l:Sigma02} and Proposition~\ref{p:Sigma02-implies-rigid}.
		\item[\impnext] By a well-known result of Kechris (see \cite{Hjorth2000a}*{Lemma~5.2}), in order to prove that $\cong_T$ is essentially countable, it suffices to produce a standard Borel space $Y$ and a Borel map $\Psi \colon \Mod(T) \to Y$ such that the image of each isomorphism class is countable and the images of different classes are disjoint. Let $Y = \bigsqcup_k K(\tS_k(T))$ and define $\Psi$ as follows: for a given $\xi \in \Mod(T)$, choose $k \in \N$ and $u \in \N^k$ such that $p = \tp^\xi u$ is rigid, choose a basic $\tau$-open $U \sub \tS_k(T)$ and a rational $\eps > 0$ such that Lemma~\ref{l:compact-class} holds for $p$ and set $\Psi(\xi) = \cl[\tau]{[p]_{R_{U, \eps}}}$. We check that this can be done in a Borel way. First, by Lemma~\ref{l:ii-implies-Borel}, the equivalence relation $\cong_T$ is Borel. Now Lemma~\ref{l:rigid-pi11} implies that the set
\begin{equation*}
  W = \set{(\xi, u) \in \Mod(T) \times \N^{<\N} : \tp^\xi u \text{ is rigid}}
\end{equation*}
is $\bPi^1_1$ and by assumption, each section $W_\xi$ for $\xi \in \Mod(T)$ is non-empty. Then by number uniformization, there exists a Borel map $\Phi \colon \Mod(T) \to \N^{<\N}$ such that $\tp^\xi \Phi(\xi)$ is rigid. Write $u = \Phi(\xi)$, $k = |u|$ and $p = \tp^\xi u$. Next, for Lemma~\ref{l:compact-class} to hold, we need that $2\eps < \rho_\xi(u)$ and $U \cap B'_{2\eps}(p) \sub B'_\eps(p)$. Thus $\eps$ and $U$ can also be chosen in a Borel way by Lemma~\ref{l:p-to-pRUe-Borel}. Finally, note that $\cl[\tau]{[p]_{R_{U, \eps}}} = \cl[\tau]{U \cap B_\eps(p)}$ and this is again Borel. Indeed, for a $\tau$-open $V \sub \tS_{k}(T)$, we have
\begin{equation*}
  \begin{split}
    \cl[\tau]{U \cap B_\eps(p)} \cap V \neq \emptyset &\iff B_\eps(p) \cap U \cap V \neq \emptyset \\
    &\iff \exists v \in \N^{k} \ d^\xi(u, v) < \eps \And \tp^\xi v \in U \cap V
  \end{split}
\end{equation*}
and this is a Borel condition.

	As there are only countably many choices for $k$, $U$ and $\eps$, $R_{U, \eps}$ has only countably many classes, and for isomorphic $M$ and $N$, $\Theta_k(M) = \Theta_k(N)$, we obtain that the image of each isomorphism class is countable. Suppose now that $\Psi(\xi) = \Psi(\eta)$ for some $\xi, \eta \in \Mod(T)$. This implies that there exists $k \in \N$ and $\bar a \in M_\xi^k$ such that $p = \tp \bar a$ is rigid and $p \in \Psi(\xi) = \Psi(\eta) \sub \Theta(M_\eta)$. In particular, $M_\eta$ realizes $p$ and by rigidity, we must have $M_\xi \cong M_\eta$.
		\item[\impfirst] This is obvious. 
	\end{cycprf}
  \end{proof}

  We conclude this section with two remarks comparing Theorem~\ref{th:loc-cpct-HK} to the analogous result of Hjorth and Kechris about discrete structures and showing with examples that natural modifications of Theorem~\ref{th:loc-cpct-HK} fail.
\begin{remark}
  \label{rem:cmpHK1}
  One may ask whether it is possible to replace the notion of a rigid type in Theorem~\ref{th:loc-cpct-HK} with the requirement that the type (considered as a theory) has an atomic model. (Indeed, this is the condition used by Hjorth and Kechris in the discrete case.) We first note that in classical logic, every rigid type admits an atomic model. This is simply because if $p$ is rigid, then $\tS_n(p)$ is countable for every $n$ and in every countable Polish space, isolated points are dense. However, this fails in continuous logic as can be seen from the following example. Let $\bG$ denote the Gurarij Banach space. This is the Fraïssé limit of finite-dimensional Banach spaces and it is $\aleph_0$-categorical in the $\Loo$ fragment. Then there exists a four-dimensional subspace $E \sub \bG$ such that the theory of $\bG$ with parameters for $E$ does not admit an atomic model (see Ben Yaacov--Henson~\cite{BenYaacov2017b}*{Example~6.6}). For us, this means that if $\bar e \in \bG^4$ is a basis for $E$, then $\tp \bar e$ does not admit an atomic model. However, this type is rigid because $\Th(\bG)$ is $\aleph_0$-categorical. We do not know a similar example for locally compact structures but strongly suspect that one exists (allowing arbitrary fragments). 
\end{remark}

\begin{remark}
  \label{rem:one-gen-Cstar-algebras}
  Another easy consequence of the Hjorth--Kechris result is that isomorphism of finitely generated discrete structures is an essentially countable equivalence relation. This also fails in the continuous setting as can be seen by combining several results from the literature as follows. Thiel and Winter \cite{Thiel2014}*{Theorem~3.8} have proved that separable, $\cZ$-stable \Cstar-algebras are singly generated and Toms and Winter \cite{Toms2008}*{Theorem~2.3} have shown that approximately divisible, separable \Cstar-algebras are $\cZ$-stable. It follows from the proof of the main theorem in Elliott~\cite{Elliott1993} that separable, simple, AI algebras are approximately divisible, and finally, Sabok~\cite{Sabok2016} has proved that the isomorphism relation for separable, simple, AI algebras is bi-reducible with the universal equivalence relation given by a Polish group action. By combining all of this, we conclude that isomorphism for singly generated \Cstar-algebras is universal for orbit equivalence relations of Polish group actions.
\end{remark}


\section{Pseudo-connected metric structures and a theorem of Kechris}
\label{sec:pseudo-connected-metric-struct}

In this section, we use some basic model theory and Theorem~\ref{th:loc-cpct-HK} to deduce a generalization of a theorem of Hjorth about pseudo-connected locally compact metric spaces. We also show how to apply this to recover a theorem of Kechris about orbit equivalence relations of actions of locally compact Polish groups.

Let $(Z, d)$ be a locally compact metric space. We recall from \cite{Gao2003} that the \df{pseudo-component} of a point $z_0 \in Z$ is defined as:
\begin{multline}
  \label{eq:pseudo-comp}
  C[z_0] \coloneqq \set{z \in Z : \exists z_1, \ldots, z_n \ \forall i = 0, \ldots, n-1 \\
    d(z_i, z_{i+1}) < \rho(z_i) \And z_n = z}.
\end{multline}
The space $Z$ is called \df{pseudo-connected} if it has only one pseudo-component. Examples of pseudo-connected locally compact metric spaces are the connected spaces and the \df{proper} metric spaces (spaces where every closed ball is compact).

\begin{remark}
  \label{rem:unbounded}
  Our model-theoretic setting is limited to bounded structures, so, strictly speaking, the example of proper metric spaces does not fall into our framework. It is possible to treat unbounded structures in several ways. One is to replace the distance predicate $d$ by infinitely many predicates $\set{d_n}_{n \in \N}$ defined by $d_n = d \wedge n$ (and take $d_1$ as the ``official'' distance used to define the moduli of continuity). Then one needs to add the axiom 
  \begin{equation*}
    \inf_{x, y} \bigvee_n 1 \wedge (n - d_n(x, y)) = 1
  \end{equation*}
  which states that $d(x, y)$ is finite for all points $x, y$.

  Another possibility is to replace the metric $d$ by $d' = d/(1+d)$ which is bounded by $1$. Then the condition of being proper is replaced by the condition that every closed ball of radius less than $1$ is compact and the new space is still pseudo-connected. Note that both encodings preserve the equivalence relation of isomorphism.
\end{remark}

Recall that if $M$ is a model and $A \sub M$, the \df{algebraic closure of $A$} is defined by
\begin{equation*}
  \acl A \coloneqq \bigcup \set{D \sub M : D \text{ is $\Loo$-definable from $A$ and compact}}.
\end{equation*}
See \cite{BenYaacov2008}*{Section~9} for the definition of definable sets and more details on the algebraic closure operator. We will only use the notion of algebraic closure for the finitary fragment $\Loo$, so that all results of \cite{BenYaacov2008} apply.

We have the following basic fact about pseudo-components.
\begin{prop}
  \label{p:pseudo-comp}
  Let $M$ be a locally compact metric structure and let $a \in M$. Then $C[a] \sub \acl a$.
\end{prop}
\begin{proof}
  Let $b \in M$ and $r < \rho(b)$. We will show that the compact ball $B'_r(b)$ is definable from $b$. We will apply \cite{BenYaacov2008}*{Proposition~9.19 (2)} to the predicate $d(x, b) \dotminus r$. If the condition (2) is not verified, there exists $\eps > 0$ such that for every $n$ there exists a point $b_n$ with $d(b_n, b) < r + 1/n$ and $d(b_n, B'_r(b)) \geq \eps$. As $r < \rho(b)$, we may assume that $b_n \to b'$ but then $d(b, b') \leq r$ and $d(b', B'_r(b)) \geq \eps$, contradiction.

  Now the conclusion of the proposition follows from the definition \eqref{eq:pseudo-comp} and the transitivity of the algebraic closure operator (see \cite{BenYaacov2008}*{Proposition~10.11 (2)}).
\end{proof}

\begin{prop}
  \label{p:pseudo-conn-aleph0-categorical}
  Let $\cF$ be a fragment and let $T$ be an $\cF$-theory such that all models of $T$ are pseudo-connected. Then for every model $M \models T$ and every $c \in M$, the theory $\Th_\cF(M, c)$ is $\aleph_0$-categorical.
\end{prop}
\begin{proof}
  Suppose that $(N, b) \equiv_\cF (M, c)$. In particular, $(N, b) \equiv_\Loo (M, c)$, so there exists a model $K$ and $\Loo$-elementary embeddings $f \colon M \to K$ and $g \colon N \to K$ with $f(b) = g(c)$. We have from Proposition~\ref{p:pseudo-comp} that $M = \acl c$ and similarly, as $N \models T$, $N$ is also pseudo-connected, so $N = \acl b$. By \cite{BenYaacov2008}*{Corollary~10.5}, we have that
  \begin{equation*}
    f(M) = \acl f(b) = \acl g(c) = g(N),
  \end{equation*}
  so $g^{-1} \circ f$ is an isomorphism $M \to N$ sending $b$ to $c$.
\end{proof}

\begin{cor}[cf. \cite{Gao2003}*{Theorem~5.7}]
  \label{c:pseudo-conn-point-smooth}
  Let $L$ be a signature, let $\cF$ be a fragment of $\Lomo(L)$ and let $T$ be an $\cF$-theory such that all models of $T$ are pseudo-connected. Let $c$ be a constant symbol that does not appear in $L$. Let $T'$ be the theory $T$ considered in the expanded language $L \cup \set{c}$. Then the equivalence relation of isomorphism on $\Mod(T')$ is smooth.
\end{cor}
\begin{proof}
  This follows from Proposition~\ref{p:pseudo-conn-aleph0-categorical} and Theorem~\ref{th:AtomicSmooth}.
\end{proof}


The next result is a generalization of a theorem of Hjorth (\cite{Gao2003}*{Theorem~7.1}).
\begin{cor}
  \label{c:esslly-ctble}
  Let $T$ be a theory such that all of its models are pseudo-connected. Then $\cong_T$ is essentially countable.
\end{cor}
\begin{proof}
	  This follows from Proposition~\ref{p:pseudo-conn-aleph0-categorical} and Theorem~\ref{th:loc-cpct-HK}.
\end{proof}

\begin{remark}
  \label{rem:fin-many-pseudo-components}
  Corollary~\ref{c:esslly-ctble} also holds if one replaces ``are pseudo-connected'' with ``have finitely many pseudo-components''. Indeed, if $C_1, \ldots, C_n$ are the pseudo-components of a structure $M$, then $\prod_i C_i$ is open in $M^n$ and for every $\bar a \in \prod_i C_i$, we have that $M = \acl \bar a$, so the same argument works.
\end{remark}

As another application of results from the previous section, we present a new proof of a theorem of Kechris from \cite{Kechris1992a}, stating that orbit equivalence relations induced by continuous actions of locally compact Polish groups are essentially countable. First, we need a way of encoding group actions as metric structures.

Let $Z$ be a compact metrizable space and let $G \leq \Homeo(Z)$ be a closed, CLI subgroup. (Recall that a Polish group is \df{CLI} if its left uniformity is complete iff its right uniformity is complete. All Polish locally compact groups are CLI.) Denote by $\alpha$ the action $G \actson Z$. We will reduce the orbit equivalence relation $E_\alpha$ to isomorphism of metric structures with underlying space $(G, d)$, where $d$ is a fixed, right-invariant, compatible metric on $G$.

Define the metric $d_u$ on $\Homeo(Z)$ by
\begin{equation}
\label{eq:sup-metric-G}
d_u(h_1, h_2) = \sup \set{\delta(h_1 \cdot z, h_2 \cdot z) : z \in Z},
\end{equation}
where $\delta$ is some fixed compatible metric on $Z$ bounded by $1$. Note that $d_u$ restricts to a compatible, right-invariant metric on $G$ and therefore the metrics $d_u$ and $d$ are uniformly equivalent.

Let $\set{z_i}_{i \in \N}$ be a dense sequence in $Z$. Let $\Delta$ be a modulus of continuity such that every unary predicate on $G$ which is $1$-Lipschitz with respect to $d_u$ respects $\Delta$ with respect to $d$. Let $L$ be the language consisting of the metric $d$ and the unary predicates $\set{P_i}_{i \in \N}$ respecting $\Delta$. Let $T$ be the theory consisting of the Scott sentence of the metric space $(G, d)$. For each $z \in Z$, define an $L$-structure $M(z)$ with universe $(G, d)$ and predicates defined on $G$ by
\begin{equation*}
P_i^z(h) = \delta(h \cdot z, z_i).
\end{equation*}
As each $P_i^z$ is $1$-Lipschitz with respect to $d_u$, it respects $\Delta$, so $M(z)$ is a valid $L$-structure which is also a model of $T$. Also, the predicates $P_i^z$ code $z$ uniquely: if $P_i^z(1_G) = P_i^{z'}(1_G)$ for all $i$, then $z = z'$.
\begin{prop}
	\label{p:reduction-to-isom}
	The map $Z \to \Mod(T)$ given by $z \mapsto M(z)$ is a Borel reduction from $E_\alpha$ to isomorphism of models of $T$. 
\end{prop}
\begin{proof}
	One easily checks that the map $G \to G$, $h \mapsto hg$ is an isomorphism $M(z) \to M(g \cdot z)$.
	
	Conversely, suppose that $f \colon M(z) \to M(z')$ is an isomorphism and let $h_0 = f(1_G)$. Then
	\begin{equation*}
	P_i^z(1_G) = P_i^{z'}(h_0) = P_i^{h_0 \cdot z'}(1_G) \quad \text{ for all } i,
	\end{equation*}
	whence $z = h_0 \cdot z'$.
\end{proof}

\begin{cor}[Kechris~\cite{Kechris1992a}]
	\label{c:Kechris}
	Let $G \actson X$ be a Borel action of a locally compact Polish group $G$ on a Polish space $X$. Then its orbit equivalence relation is essentially countable.
\end{cor}
\begin{proof}
  Let $F(G)$ denote the space of closed subsets of $G$. It carries a compact Polish topology with basic open sets of the form
  \begin{equation*}
    \set{F \in F(G) : F \cap U_1 \neq \emptyset, \ldots, F \cap U_n \neq \emptyset, F \cap K = \emptyset},
  \end{equation*}
  where $U_1, \ldots, U_n \sub G$ are open and $K \sub G$ is compact. There is a natural action $G \actson F(G)$ by left translation and it is well-known that the action $G \actson F(G)^\N$ is universal for Borel actions of $G$ \cite{Becker1996}*{Theorem~2.6.1}. Thus it suffices to prove that the orbit equivalence relation of this action is essentially countable.

  Let $d'$ be any proper, right-invariant, compatible metric on $G$ (see Struble~\cite{Struble1974}) and let $d = d'/(1+d')$, so that $d$ is right-invariant and $(G, d)$ is pseudo-connected and bounded. The action $G \actson F(G)^\N$ gives an embedding of $G$ as a closed subgroup of $\Homeo(F(G)^\N)$. Let the language $L$ and the theory $T$ be defined as in the discussion preceding Proposition~\ref{p:reduction-to-isom}. By Proposition~\ref{p:reduction-to-isom}, the orbit equivalence relation of the action $G \actson F(G)^\N$ is Borel reducible to isomorphism of models of $T$. As $T$ contains the Scott sentence of $(G, d)$, all models of $T$ are pseudo-connected, so we can apply Corollary~\ref{c:esslly-ctble} to deduce that $\cong_T$ is an essentially countable equivalence relation. This concludes the proof.
\end{proof}

\bibliography{cont-logic-eqrel}

\end{document}